\font\smbf=ptmbo at 10.2pt
\def\bbR{\mathrm{I\!R}}
\def\rto{\bbR\nh^2}
\def\hatg{{\hat{g\hskip2pt}\hskip-1.3pt}}
\def\cro{\overline{\hskip-2pt\partial}}
\def\jg{\varPi}
\def\jq{Q}
\def\jp{B}
\def\js{S}
\def\jr{G}
\def\jz{Z}
\def\zb{\text{\smbf Z}}
\def\dzb{\dot{\hskip-1.5pt\text{\smbf Z}}\hskip1.5pt}
\def\hyp{\hskip.5pt\vbox
{\hbox{\vrule width2.5ptheight0.5ptdepth0pt}\vskip2pt}\hskip.5pt}
\def\hs{\hskip.7pt}
\def\hh{\hskip.4pt}
\def\nh{\hskip-.7pt}
\def\nnh{\hskip-1.5pt}
\def\hn{\hskip-.4pt}
\def\w{^{\phantom i}}
\def\txm{{T\hskip-2.9pt_x\w\hn M}}
\def\txr{{T\hskip-2.9pt_x\w\hn R}}
\def\tzm{{T\hskip-2.9pt_z\w\hn M}}
\def\dv{\delta}
\def\vg{\varGamma}
\def\ve{\varepsilon}
\def\vt{{\tau\hskip-4.55pt\iota\hskip.6pt}} 
\def\evt{{\tau\hskip-3.55pt\iota\hskip.6pt}} 
\def\svt{{\tau\hskip-3.45pt\iota\hskip.6pt}} 
\def\az{\alpha}
\def\bz{\beta}
\def\gz{\psi}
\def\tz{\theta}
\def\nz{n}
\def\kfo{\omega}
\def\rz{\hs\mathrm{r}\hs}
\def\sz{\sigma}
\def\bbC{{\mathchoice {\setbox0=\hbox{$\displaystyle\mathrm{C}$}
\hbox{\hbox to0pt{\kern0.4\wd0\vrule height0.9\ht0\hss}\box0}} 
{\setbox0=\hbox{$\textstyle\mathrm{C}$}\hbox{\hbox 
to0pt{\kern0.4\wd0\vrule height0.9\ht0\hss}\box0}} 
{\setbox0=\hbox{$\scriptstyle\mathrm{C}$}\hbox{\hbox 
to0pt{\kern0.4\wd0\vrule height0.9\ht0\hss}\box0}} 
{\setbox0=\hbox{$\scriptscriptstyle\mathrm{C}$}\hbox{\hbox 
to0pt{\kern0.4\wd0\vrule height0.9\ht0\hss}\box0}}}} 
\def\bbCP{\bbC\mathrm{P}}
\def\Lie{\pounds}
\def\tm{{T\hskip-.3ptM}}
\def\y{v}
\def\z{w}
\def\kfo{\omega}
\def\hatg{{\hat{g\hskip2pt}\hskip-1.3pt}}
\def\se{\mathrm{s}}
\newtheorem{theorem}{Theorem}[section]
\newtheorem{lemma}[theorem]{Lemma}
\theoremstyle{definition}
\theoremstyle{remark}
\newtheorem{remark}[theorem]{Remark}
\numberwithin{equation}{section}
\begin{document}

\title[Special Ric\-ci-Hess\-i\-an equations]{Special 
Ric\-ci\hh-\nh Hess\-i\-an equations on K\"ah\-ler manifolds}

\author[A. Derdzinski]{Andrzej Derdzinski}
\address{Department of Mathematics\\
The Ohio State University\hskip-1pt\\
231 W\hskip-2pt. 18th Avenue\\
Columbus, OH 43210, USA}
\email{andrzej@math.ohio-state.edu}
\author[P. Piccione]{Paolo Piccione}
\address{Departamento de Matem\'atica\\
Instituto de Matem\'atica e Esta\-t\'\i s\-ti\-ca\\
Uni\-ver\-si\-da\-de de S\~ao Paulo\\
Rua do Mat\~ao 1010, CEP 05508-900\\
S\~ao Paulo, SP, Brazil}
\email{piccione@ime.usp.br}

\thanks{Both authors' research was supported in part by a
FAPESP\nh-\hs OSU 2015 Regular Research Award (FAPESP grant: 2015/50265-6)}


\begin{abstract}Special Ricci-Hessian equations on K\"ahler manifolds $(M,g)$,
as defined by Maschler [Ann. Global Anal. Geom. 34 (2008), 367--380] involve 
functions $\tau$ on $M$ and state that, for some function $\alpha$ of the real
variable $\tau$, the sum of $\alpha\nabla d\tau$ and the Ricci tensor equals
a functional multiple of the metric $g$, while $\alpha\nabla d\tau$ itself is
assumed to be nonzero almost everywhere. Three well-known obvious ``standard''
cases are provided by 
(non-Einstein) gradient K\"ahler-Ricci solitons, conformally-Einstein K\"ahler
metrics, and special K\"ahler-Ricci potentials. We show that, outside of these
three cases, such an equation can only occur in complex dimension two and, at
generic points, it must then represent one of three 
types, for which, up to normalizations, $\alpha=2\cot\tau$,
or $\alpha=2\coth\tau$, or $\alpha=2\tanh\tau$. We also use the
Car\-tan-K\"ah\-ler theorem to prove that these three types are actually
realized in a ``nonstandard'' way.
\end{abstract}
\makeatletter
\@namedef{subjclassname@2020}{\textup{2020} Mathematics Subject Classification}
\makeatother
\subjclass[2020]{Primary 53C55
\and
Secondary 53C25}

\keywords{Ric\-ci\hh-\nh Hess\-i\-an equation, K\"ah\-ler manifold}

\maketitle

\setcounter{section}{0}
\setcounter{theorem}{0}
\renewcommand{\thetheorem}{\Alph{theorem}}
\section*{Introduction}
\setcounter{equation}{0}
Following Maschler \cite[p.\ 367]{maschler}, one says that functions 
$\,\vt,\az,\sz\,$ on a Riemannian manifold $\,(M\nh,g)\,$ with the 
Ric\-ci tensor $\,\rz\,$ satisfy a {\it Ric\-ci-\nh Hess\-i\-an equation\/} if
\begin{equation}\label{fma}
\az\nabla\nh d\vt\,+\,\rz\,=\,\sz\hn g\quad\mathrm{for\ some\ function\
}\,\sz:M\to\bbR\hh,
\end{equation}
$\nabla$ being the Le\-vi-Ci\-vi\-ta connection of $\,g$. We call equation 
(\ref{fma}) {\it special\/} when
\begin{equation}\label{anz}
\az\nabla\nh d\vt\ne0\,\mathrm{\ on\ a\ dense\ set,\ 
}\hh\dim M\nh=n>2\mathrm{,\ and\ 
}\,\az\,\mathrm{\ is\ a\ }C^\infty\,\mathrm{function\ of\ }\,\vt.
\end{equation}
Conditions (\ref{fma}) -- (\ref{anz}) are satisfied in several situations
that have been studied -- see below -- raising a natural question:
{\it Which functions\/ $\,\vt\mapsto\az\,$ can be realized in this way?} The
present paper provides an answer in the K\"ahler case, outside of the
classes that are already well understood. See Theorems~\ref{types} 
and \ref{reali}.
                                  
There are three well-known classes of examples leading to (\ref{fma}) --
(\ref{anz}).
\begin{enumerate}
  \def\theenumi{{\rm\roman{enumi}}}
\item[(I)] Non-Ein\-stein gradient Ric\-ci al\-most-sol\-i\-tons 
\cite{pigola-rigoli-rimoldi-setti,barros-ribeiro}, including (non-Ein\-stein)
gradient
Ric\-ci sol\-i\-tons \cite{hamilton}. Here $\,\az\,$ is a nonzero constant.
\item[(II)]  Con\-for\-mal\-ly-Ein\-stein metrics $\,g$, with $\,\vt>0\,$ and 
$\,\az=(\nz-2)/\vt$, the 
Ein\-stein metric 
being $\,\hatg=g/\vt^2\nh$. Cf.\ \cite[formula (6.2)]{derdzinski-maschler-03}.
\item[(III)]  Special K\"ah\-ler-Ric\-ci potentials $\,\vt\,$ on K\"ah\-ler
manifolds, at 
points where $\,\mathrm{r}$ is not a multiple of $\,g$. See 
\cite[Remark 7.4]{derdzinski-maschler-03}.
\end{enumerate}
A {\it special K\"ah\-ler-Ric\-ci potential\/}
\cite[Sect.\,7]{derdzinski-maschler-03} 
on a K\"ah\-ler manifold $\,(M\nh,g)\,$ with the com\-plex-struc\-ture tensor
$\,J\,$ is any nonconstant function $\,\vt\,$ on $\,M$ having a 
real-hol\-o\-mor\-phic gradient 
$\,\y=\nabla\nh\vt\,$ 
for which, at points where $\,\y\ne0$, all 
nonzero vectors orthogonal to $\,\y\,$ and $\,J\nh \y\,$ are eigen\-vec\-tors
of both $\,\nabla\nh d\vt$ and 
$\,\mathrm{r}$. Such 
triples $\,(M\nh,g,\vt)$ are completely understood, both locally
\cite{derdzinski-maschler-03} and in the compact case 
\cite{derdzinski-maschler-06}. 

The classes (I) -- (III) are far from disjoint: for instance 
\cite[Corollary 9.3]{derdzinski-maschler-03}, 
in the K\"ah\-ler category, if $\,\nz>4$, (II) is a special case of (III). 

We are interested in $\,M\nh,g,\vt,\az,\sz\,$ satisfying (\ref{fma}) --
(\ref{anz}) and such that
\begin{equation}\label{trv}
2\hh\mathrm{r}\hh(\y,\,\cdot\,)=-\hh dY\,\mathrm{\ for\
}\,\y=\nabla\nh\vt\,\mathrm{\ and\ }\,Y\nh=\Delta\hn\vt\hh.  
\end{equation}
Here, and throughout the paper, we use the notational conventions
\begin{equation}\label{ven}
\y=\nabla\nh\vt,
\hskip17ptQ=g(\y,\y)\hs,\hskip17ptY\nh=\Delta\hn\vt,\hskip17pt\nz=\dim M
\end{equation}
whenever $\,(M\nh,g)\,$ is a Riemannian manifold and $\,\vt:M\to\bbR$.
As we point out near at the end of Section~\ref{pr}, with $\,J\,$ 
denoting the com\-plex-struc\-ture tensor,
\begin{equation}\label{imp}
\begin{array}{l}
\mathrm{for\ K}\ddot{\mathrm{a}}\mathrm{h\-ler\ metrics\ 
}\hs\,g\mathrm{,\ conditions\ \hs(\ref{fma})\hn-\hn(\ref{anz})\hs\ imply\
(\ref{trv}),}\\
\mathrm{and\ the\  gradient\ }\y=\nabla\nh\vt\mathrm{\ is\ a\ 
real}\hyp\mathrm{hol\-o\-mor\-phic\ vector\ field}\\
\mathrm{or,\hh\ equivalently,\hh\ }\,J\y\,\mathrm{\hh\ is\hh\ a\hh\ 
real}\hyp\mathrm{hol\-o\-mor\-phic\hh\ }\,g\hyp\mathrm{Kil\-ling\hh\ field.}
\end{array}
\end{equation}
Assuming (\ref{fma}) -- (\ref{anz}), we may treat the derivatives
$\,\az'\nh=\hs d\az/d\vt\,$ and $\,\az''$ both as functions of the real
variable $\,\vt\,$ and as functions $\,M\to\bbR$. In Sections~\ref{rh}
and~\ref{km} we prove the following two results, as well as
Theorem~\ref{types}, stated below.
\begin{theorem}\label{apppa}Under the hypotheses\/ {\rm(\ref{fma})}
--\/ {\rm(\ref{trv})}, at points where\/ $\,\az''\nh+\az\az'\nh\ne0$ and\/ 
$\,d\vt\ne0$, both\/ $\,Q=g(\nabla\nh\vt,\nnh\nabla\nh\vt)\,$ and\/
$\,Y\nh=\Delta\hn\vt\,$ are, locally, functions of\/ $\,\vt$.
\end{theorem}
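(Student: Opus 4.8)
The plan is to prove that $\,dQ\wedge d\vt=0\,$ on the set where $\,\az''\nh+\az\az'\ne0\,$ and $\,d\vt\ne0$; since $\,d\vt\ne0\,$ there, this says exactly that $\,Q\,$ is locally a function of $\,\vt$. First I would contract (\ref{fma}) with $\,\y=\nabla\nh\vt\,$ on one slot, using $\,g(\y,\,\cdot\,)=d\vt$, the identity $\,\tfrac12\hs dQ=(\nabla\nh d\vt)(\y,\,\cdot\,)$, and (\ref{trv}) in the form $\,\rz(\y,\,\cdot\,)=-\tfrac12\hs dY$. This yields the first-order relation $\,dY=\az\hs dQ-2\sz\hs d\vt$. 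Wedging with $\,d\vt\,$ gives $\,dY\wedge d\vt=\az\hs dQ\wedge d\vt$, so once $\,dQ\wedge d\vt=0\,$ is established, $\,Y\,$ is automatically a local function of $\,\vt\,$ as well; everything thus reduces to the single wedge identity for $\,Q$.

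Next I would manufacture an expression for $\,d\sz$. Rewriting (\ref{fma}) as $\,\rz=\sz\hh g-\az\hs\nabla\nh d\vt\,$ and applying the contracted second Bianchi identity $\,\mathrm{div}\,\rz=\tfrac12\hs d\se$, the divergence of $\,\az\hs\nabla\nh d\vt\,$ splits into $\,\tfrac{\az'}2\hs dQ\,$ (from $\,d\az=\az'\hs d\vt\,$ contracted against the Hessian) together with $\,\az\,\mathrm{div}(\nabla\nh d\vt)$. The Boch\-ner commutation identity $\,\mathrm{div}(\nabla\nh d\vt)=dY+\rz(\y,\,\cdot\,)\,$ and (\ref{trv}) turn the latter into $\,\tfrac12\az\hs dY$. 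Feeding in the trace of (\ref{fma}), namely $\,\se=n\sz-\az\hs Y$, the $\,dY$-terms cancel and I am left with $\,(\tfrac n2-1)\hh d\sz=\tfrac{\az'}2(Y\hs d\vt-dQ)$. This is where the hypothesis $\,n>2\,$ from (\ref{anz}) enters, allowing division by $\,\tfrac n2-1$.

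Finally I would take the exterior derivative of this last relation. Since $\,d(d\sz)=0\,$ and $\,\az',\az''$ are functions of $\,\vt$, the $\,Y\hs d\vt\wedge d\vt\,$ contribution drops out and I obtain $\,\tfrac{\az'}2\hs dY\wedge d\vt+\tfrac{\az''}2\hs dQ\wedge d\vt=0$. Substituting $\,dY\wedge d\vt=\az\hs dQ\wedge d\vt\,$ from the first step collapses this to $\,\tfrac12(\az''\nh+\az\az')\hs dQ\wedge d\vt=0$. Hence $\,dQ\wedge d\vt=0\,$ exactly where $\,\az''\nh+\az\az'\ne0$, and with it $\,dY\wedge d\vt=0$, proving that both $\,Q\,$ and $\,Y\,$ are locally functions of $\,\vt$.

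The main obstacle, and the step demanding the most care, is pinning down the exact constant in the Boch\-ner step. Only the convention giving $\,\mathrm{div}(\nabla\nh d\vt)=dY+\rz(\y,\,\cdot\,)$ --- equivalently $\,\tfrac12\Delta\hn Q=|\nabla\nh d\vt|^2+g(\y,\nabla\nh Y)+\rz(\y,\y)\,$ --- produces the factor $\,\tfrac12\,$ that makes the $\,dY$-terms cancel in the trace step and leaves the coefficient of $\,dQ\,$ equal to $\,-\tfrac{\az'}2$, whose $\,\vt$-derivative then supplies the decisive $\,-\tfrac{\az''}2$. A single sign slip would replace $\,\az''\nh+\az\az'\,$ by an unrelated expression and sever the link with the theorem's hypothesis; condition (\ref{trv}) is precisely what removes every curvature quantity except the multiples of $\,dY\,$ that recombine into the stated coefficient, so the curvature conventions must be fixed at the outset and tracked consistently throughout.
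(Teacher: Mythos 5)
Your proof is correct and takes essentially the same route as the paper: your two key relations are precisely (\ref{dqy}.i) and (\ref{dqm}.ii) (obtained, as in the paper, by contracting (\ref{fma}) with $\,\y\,$ and taking its divergence via Bochner, Bianchi, the trace identity (\ref{nec}), and (\ref{trv})), and the decisive determinant $\,\az''\nh+\az\az'\,$ arises identically. The only cosmetic difference is that the paper packages the endgame as a $\,2\times2\,$ linear system for $\,(Q,Y)\,$ with right-hand sides that are functions of $\,\vt\,$ (via (\ref{fot})), whereas you solve the equivalent system for $\,(dQ\wedge d\vt,\hs dY\wedge d\vt)\,$ after one further exterior differentiation.
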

\begin{theorem}\label{spkrp}Let functions\/ $\,\vt,\az,\sz\,$ satisfy a
special Ric\-ci-\nh Hess\-i\-an equation\/ {\rm(\ref{fma})}, with\/
{\rm(\ref{anz})}, on a K\"ah\-ler manifold\/ $\,(M\nh,g)\,$ of real
dimension\/ $\,\nz\ge4$. 
If\/ $\,\az\,d\az$ and\/ $\,d\vt\,$ are nonzero at all points of an open
sub\-man\-i\-fold\/ $\,\,U\hs$ of\/ $\,M\nh$, and
\begin{enumerate}
  \def\theenumi{{\rm\roman{enumi}}}
\item $\nz>4$, or
\item $\nz=4\,$ and\/ $\,d\sz\wedge d\vt=0\,$ identically in\/ $\,\,U\hs$ or,
finally, 
\item $dQ\wedge d\vt=0\,$ everywhere in\/ $\,\,U\nh$, where\/ 
$\,Q=g(\nabla\nh\vt,\nnh\nabla\nh\vt)$,
\end{enumerate}
then\/ $\,\vt:U\to\bbR\,$ is a special K\"ah\-ler-\nh Ric\-ci potential on the 
K\"ah\-ler manifold\/ $\,(U\nh,g)$.
\end{theorem}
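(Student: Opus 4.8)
The plan is to reduce the statement to two clean facts about the Hessian and then verify them. By (\ref{imp}) the gradient $v=\nabla\tau$ is real-holomorphic, so the Hessian operator $A$, defined by $(\nabla d\tau)(X,Y)=g(AX,Y)$, and the Ricci operator both commute with $J$ and are self-adjoint; since $\alpha\neq0$, the equation reads $\alpha A+\mathrm{Ric}=\sigma\,\mathrm{Id}$, so the two operators share the same (necessarily $J$-invariant) eigenspaces. Writing $\mathcal V=\mathrm{span}(v,Jv)$ and $\mathcal H=\mathcal V^{\perp}$, I first note $Av=\tfrac12\nabla Q$ and that $g(\nabla Q,Jv)=0$ automatically: from $\nabla_{Jv}v=JAv$ and self-adjointness one gets $dQ(Jv)=-dQ(Jv)$. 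Hence $\nabla Q\in\mathcal V$ (equivalently $dQ\wedge d\tau=0$) precisely when $\mathcal H$ is $A$-invariant, and ``$\tau$ is a special K\"ahler-Ricci potential'' is equivalent to the pair of assertions $(\ast)$ $\mathcal H$ is $A$-invariant, and $(\ast\ast)$ $A|_{\mathcal H}$ is a scalar multiple of the identity.

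The central computation is to apply $d^{\nabla}$ to $\alpha\,\nabla d\tau=\sigma g-\mathrm r$, antisymmetrize, and take a single trace. Using $d\alpha=\alpha'\,d\tau$, the Ricci identity $(d^{\nabla}\nabla d\tau)(X,Y,\cdot)=-R(X,Y,\cdot,v)$, the Bochner formula $\mathrm{div}(\nabla d\tau)=d(\Delta\tau)+\mathrm r(v,\cdot)$, the contracted second Bianchi identity $\mathrm{div}\,\mathrm r=\tfrac12 d\mathrm s$ with $\mathrm s=n\sigma-\alpha\Delta\tau$, and (\ref{trv}) in the form $\mathrm r(v,\cdot)=-\tfrac12 d(\Delta\tau)$, all terms collapse to the single trace identity $\alpha'\,dQ+(n-2)\,d\sigma=\alpha'(\Delta\tau)\,d\tau$. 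Since $\alpha'\neq0$ (because $\alpha\,d\alpha\neq0$ and $d\tau\neq0$) and $n>2$, this gives the equivalence $dQ\wedge d\tau=0\iff d\sigma\wedge d\tau=0$. Thus $(\ast)$ holds in case (iii) by hypothesis and in case (ii) because there $d\sigma\wedge d\tau=0$; this is exactly where conditions (ii) and (iii) are consumed.

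For the remaining work -- deriving $(\ast)$ in case (i) and proving $(\ast\ast)$ in all cases -- the key tool is the pointwise identity $(\nabla_X A)Y=J\,R(Jv,X)Y$, obtained by differentiating $\nabla_X(Jv)=JAX$ and invoking the Killing identity for $\xi=Jv$. Coupled with the standard fact $g((\nabla_X A)Y,Z)=(\lambda-\mu)\,g(\nabla_X Y,Z)$ for eigenvectors $Y,Z$ of $A$ with distinct eigenvalues $\lambda,\mu$, this turns each pair of distinct eigenvalues into a curvature relation $(\lambda-\mu)\,g(\nabla_X Y,Z)=-R(Jv,X,Y,JZ)$. The plan is to run this, together with the K\"ahler symmetries of $R$ and $\mathrm{Ric}=\sigma\,\mathrm{Id}-\alpha A$, through the $J$-invariant eigenspace decomposition of $A$ on $\mathcal H$, and to show that a second horizontal eigenvalue, or a failure of $\mathcal H$-invariance, can persist only when $\dim_{\mathbb C}\mathcal H=1$, that is $n=4$, and then only with $dQ\wedge d\tau\neq0$ and (again by the trace identity) $d\sigma\wedge d\tau\neq0$. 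Each of (i), (ii), (iii) excludes precisely this configuration, so $(\ast)$ and $(\ast\ast)$ hold and $\tau$ is a special K\"ahler-Ricci potential.

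The hard part will be this last step: controlling the $J$-invariant eigenspace decomposition under the curvature identity. The obstacle is that the horizontal eigendistributions need not be integrable or parallel a priori, so one must track the eigenvalue functions together with the second fundamental forms of the eigendistributions and show that the K\"ahler constraints on $R(Jv,X,Y,JZ)$ force eigenvalue coincidence except in the lowest horizontal dimension. This dimension count is exactly what makes hypothesis (i) sufficient by itself, whereas in complex dimension two ($n=4$) the nonstandard solutions produced later in the paper survive, and the non-degeneracy assumptions (ii) or (iii) are genuinely needed to rule them out.
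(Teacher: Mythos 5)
Your preliminary reductions are correct and match the paper's own computations: your trace identity $\alpha'\,dQ+(n-2)\,d\sigma=\alpha'(\Delta\tau)\,d\tau$ is exactly what (\ref{dqm}.ii) says after cancelling the $Q\alpha''\,d\tau$ terms, and together with $\alpha'\ne0$ it does give $dQ\wedge d\tau=0\iff d\sigma\wedge d\tau=0$, which disposes of the relation between hypotheses (ii) and (iii). The reformulation of ``special K\"ahler--Ricci potential'' as $(\ast)$ plus $(\ast\ast)$ is also legitimate, since $\alpha\ne0$ makes the eigenspaces of $\nabla d\tau$ and of $\mathrm{r}$ coincide. But there are two genuine gaps. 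First, for case (i) you never actually obtain $(\ast)$: the paper gets $d\sigma\wedge d\tau=0$ when $n>4$ from Lemma~\ref{dadsz} (Maschler's observation that $0=d\rho=d\sigma\wedge\omega-d\alpha\wedge i\hh\partial\overline{\partial}\tau$ forces $d\alpha\wedge d\sigma\wedge\omega=0$, and wedging with $\omega$ is injective on $2$-forms precisely when $n>4$, Remark~\ref{injct}). Your substitute --- a dimension count claiming that a second horizontal eigenvalue or non-invariance of $\mathcal H$ ``can persist only when $\dim_{\bbC}\mathcal H=1$'' --- is asserted, not proved.

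Second, and more seriously, the passage from $d\sigma\wedge d\tau=0$ to the conclusion $(\ast\ast)$ (that $\nabla d\tau$ restricted to $\mathcal H$ is a multiple of the identity) is the entire content of the theorem, and you explicitly leave it as a ``plan'': you name the obstacle (non-integrable, non-parallel eigendistributions, tracking eigenvalue functions and second fundamental forms through the identity $(\lambda-\mu)\hs g(\nabla_XY,Z)=-R(Jv,X,Y,JZ)$) without resolving it. The paper does not carry out this analysis either; it cites \cite[Corollary~9.2]{derdzinski-maschler-03}, where precisely this classification (Ricci--Hessian equation with $\alpha\,d\alpha\ne0$, $d\tau\ne0$ and $\sigma$ a function of $\tau$ implies a special K\"ahler--Ricci potential) is established. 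So your outline identifies the right intermediate statement but does not prove it, and without either executing the curvature argument or invoking the external classification result, the proof is incomplete.
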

With $\,\y,Q,Y\hs$ as in (\ref{ven}), a function $\,\vt\,$ on a 
Riemannian manifold $\,(M\nh,g)\,$ has $\,dQ\wedge d\vt=0\,$
if and only if $\,Q\,$ is locally, at points where $\,d\vt\ne0$, a function 
of $\,\vt$. This amounts 
to requiring the integral curves of $\,\y\,$ to be re\-pa\-ram\-e\-trized
geodesics (since, due to formula (\ref{nvv}) below, the latter condition means
that $\,\nabla\nh\!_\y\w\y\,$ is a functional multiple of $\,\y$). Such
functions $\,\vt$, called {\it trans\-nor\-mal}, have been studied extensively
\cite{wang,miyaoka,bolton}, and are referred to as {\it
iso\-par\-a\-met\-ric\/} when, in addition, $\,dY\nnh\wedge d\vt=0$.

Theorem~\ref{spkrp} renders the trans\-nor\-mal case $\,dQ\wedge d\vt=0$, as 
well as real dimensions $\,\nz>4$, rather uninteresting in the context
of special Ric\-ci-\nh Hess\-i\-an equations (\ref{fma}) -- (\ref{anz}) on
K\"ah\-ler manifolds, since at $\,d\az$-ge\-ner\-ic points (see the end of
Section~\ref{pr}) one then ends up with examples (I) or (III) above, cf.\
Remark~\ref{ctsgm}, of
which the former is the subject of a large existing literature, and the 
latter, as mentioned earlier, has been completely described. This is why our
next two results focus exclusively on the real dimension four and functions
$\,\vt\,$ with $\,dQ\wedge d\vt\,$ not identically zero.
\begin{remark}\label{affmo}Equation (\ref{fma}), with (\ref{anz}), remains 
satisfied after $\,\vt\,$ and the function $\,\vt\mapsto\az=\az(\vt)\,$ have 
been subjected to an {\it af\-fine modification\/} in the sense of being 
replaced with $\,\hat\vt$ and $\,\hat\vt\mapsto\hat\az(\hat\vt)\,$ given by
$\,\hat\vt=p+\vt/c\,$ and $\,\hat\az(\hat\vt)
=c\hh\az(c\hh\hat\vt-cp)$ for real constants $\,c\ne0\,$ and $\,p$.
\end{remark}
\begin{theorem}\label{types}If the special Ric\-ci-\nh Hess\-i\-an
equation\/ {\rm(\ref{fma})} and\/ {\rm(\ref{anz})} both hold for functions\/
$\,\vt,\az,\sz\,$ on a K\"ah\-ler manifold\/ $\,(M\nh,g)\,$ of real
dimension four, while\/ $\,dQ\wedge d\vt\ne0\,$ everywhere in an open connected
set\/ $\,\,U\subseteq M\nh$, then the function\/ $\,\az\,$ of the variable\/
$\,\vt\,$ and its derivative\/
$\,\az'\nh=\hs d\az/d\vt\,$ satisfy, on\/ $\,\,U\nh$, the equation
\begin{equation}\label{app}
\az''\hh+\,\az\az'\hh=\,0\hh,\mathrm{\ that\ is,\ 
}\,2\az'\nnh+\hs\az^2=\,4\ve\,\mathrm{\ with\ a\ constant\ }\,\ve\in\bbR\hh.
\end{equation}
In addition, for\/ $\,Q\,$ and\/ $\,Y\nnh$ as in\/ {\rm(\ref{ven})},
the functions
\begin{equation}\label{cts}
2\hh\tz\,=\,\az\hs\se+4\ve Y\,\,\mathrm{\ and\ }\,\,
\kappa\,=\,\tz\gz\,+\,\az^{-\nnh1}Y\hs-\hs\,Q\hskip7pt\mathrm{are\ both\
constant,}
\end{equation}
$\gz\,$ being given by\/ 
$\,4\ve\hh\gz=\vt-2/\nh\az$, if\/ $\,\ve\ne0$, 
or\/ $\,3\gz=2/\nh\az^3\nh$, when\/ $\,\ve=0$. Furthermore, $\,\sz\,$ in\/ 
{\rm(\ref{fma})} and the function\/ $\,F\,$ of the variable\/ $\,\vt\,$
characterized by
\begin{equation}\label{fcf}
4\ve F=\tz(2-\vt\az)+4\ve\kappa\az\,\mathrm{\ for\ }\,\ve\ne0\mathrm{,\ and\ 
}\,F=\kappa\az-2\hh\tz/(3\az^2)\,\mathrm{\ if\ }\,\ve=0\hh,
\end{equation}
and thus depending on the real constants $\,\tz,\kappa$, satisfy the conditions
\begin{equation}\label{cns}
\mathrm{a)}\hskip6ptY\nnh-Q\az=F,\quad
\mathrm{b)}\hskip6pt2\sz=-(Q\az'\nh+F'),\quad
\mathrm{c)}\hskip6pt\Delta\az=F\nh\az'\nh=-\nh F''.
\end{equation}
Finally, up to af\-fine 
modifications -- see Remark\/~{\rm\ref{affmo}} -- the pair\/
$\,(\az(\vt),\ve)\,$ is one of the following five\/{\rm:} 
$\,(2,1),\,\,(2/\nh\vt,0),\,\,(2\hn\tanh\vt,1),\,\,(2\hn\coth\vt,1),\,\,(2\hn\cot\vt,-\nnh1)$.
\end{theorem}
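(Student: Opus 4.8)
The plan is to treat the real dimension $\nz=4$ as a fixed input and to extract everything from the special equation~(\ref{fma}), its Kähler consequence~(\ref{trv}) (which holds here by~(\ref{imp})), the Bochner commutation formula, and the contracted second Bianchi identity. I would first dispose of~(\ref{app}) together with the final classification. Since $dQ\wedge d\vt\ne0$ on $U$ forces $d\vt\ne0$ and prevents $Q$ from being locally a function of $\vt$, Theorem~\ref{apppa} (whose hypotheses hold by~(\ref{imp})) excludes $\az''+\az\az'\ne0$ at every point of $U$; hence $\az''+\az\az'\equiv0$, and integrating the exact derivative $\az''+\az\az'=(\az'+\frac12\az^2)'$ gives $2\az'+\az^2=4\ve$ for a constant $\ve$, which is~(\ref{app}). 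The final assertion is then the integration of this Riccati equation: an affine modification (Remark~\ref{affmo}) multiplies $\ve$ by a positive square, so $\ve$ may be normalized to $-1,0,1$, and the equilibrium solution together with the two separated branches produce exactly $\az=2,\ 2\tanh\vt,\ 2\coth\vt$ for $\ve=1$, while $\az=2/\vt$ for $\ve=0$ and $\az=2\cot\vt$ for $\ve=-1$, giving the five listed pairs.

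The engine for the remaining parts is a pair of first-order identities. Contracting~(\ref{fma}) against $\y$ and using~(\ref{trv}) as $\rz(\y,\,\cdot\,)=-\frac12 dY$, together with $dQ=2\,\nabla d\vt(\,\cdot\,,\y)$, one obtains
\[\az\,dQ\,=\,2\sz\,d\vt+dY.\]
Substituting $\rz=\sz g-\az\,\nabla d\vt$ into $\mathrm{div}\,\rz=\frac12\,d\se$, evaluating $\mathrm{div}(\nabla d\vt)=dY+\rz(\y,\,\cdot\,)$ by Bochner and the contraction $\az'\,\nabla d\vt(\y,\,\cdot\,)=\frac12\az'\,dQ$, and invoking~(\ref{trv}) together with the trace $\az Y+\se=4\sz$ of~(\ref{fma}), one obtains
\[d\se\,=\,\az'Y\,d\vt-\az\,dY-2\az'\,dQ.\]

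With these two identities in hand, the conservation laws~(\ref{cts}) and the relations~(\ref{cns}) follow by differentiation and algebra. Expanding $d(\az\se+4\ve Y)$ and eliminating $dQ$, the $dY$-terms cancel precisely because $\az^2+2\az'=4\ve$ — which is what forces the coefficient $4\ve$ — and the remainder equals $\az'(\se+\az Y-4\sz)\,d\vt$, vanishing by the trace relation in dimension four; hence $\az\se+4\ve Y=2\tz$ is constant. A parallel computation, using the first identity and $\gz'=\az^{-2}$ (a one-line consequence of~(\ref{app})), shows $d(\tz\gz+\az^{-1}Y-Q)=0$, so $\kappa$ is constant. The first identity also gives $d(Y-Q\az)=-(2\sz+Q\az')\,d\vt$, whence $Y-Q\az$ is a function $F$ of $\vt$ with $F'=-(2\sz+Q\az')$; this is~(\ref{cns})a and~(\ref{cns})b, and rewriting $F=\az(\kappa-\tz\gz)$ from~(\ref{cts}) and substituting $\gz$ yields~(\ref{fcf}). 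Finally $\Delta\az=\az''Q+\az'Y=\az'(Y-\az Q)=\az'F$ by~(\ref{app}) and~(\ref{cns})a, while differentiating $F=\az(\kappa-\tz\gz)$ twice and using $\az''=-\az\az'$ gives $F''=\az''(\kappa-\tz\gz)=-\az'F$, which completes~(\ref{cns})c.

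I expect the main obstacle to be assembling the two first-order identities correctly. The formula for $d\se$ in particular demands careful bookkeeping of the Bochner term and of the contribution of $\nabla\az=\az'\y$, and it is only after both identities are secured that the cancellations producing~(\ref{cts}) and~(\ref{cns}) become visible. Everything downstream is then algebra governed by the single ordinary differential equation~(\ref{app}) and by the dimension being exactly four.
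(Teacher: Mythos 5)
Your proposal is correct and follows essentially the same route as the paper: Theorem~\ref{apppa} rules out $\az''+\az\az'\ne0$ anywhere on $U$ (yielding (\ref{app}) and, by integrating the Riccati equation up to affine modification, the five pairs), and your two first-order identities are precisely (\ref{dqy}.i) and (\ref{apd}.b) with $\nz=4$, from which the constancy of $\tz,\kappa$ and the relations (\ref{cns}) follow by the same eliminations the paper performs. The only cosmetic difference is that you obtain (\ref{cns}.b) by reading off the $d\vt$-coefficient of $d(Y-Q\az)$ rather than by contracting once more with $\y$ as the paper does.
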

\begin{theorem}\label{reali}Each of the five options listed in
Theorem\/~{\rm\ref{types}}, namely,
\[
(2,1)\hh,\quad(2/\nh\vt,0)\hh,\quad(2\hn\tanh\vt,1)\hh,\quad(2\hn\coth\vt,1)\hh,\quad(2\hn\cot\vt,-\nnh1)\hh,
\]
is realized by a special
Ric\-ci-\nh Hess\-i\-an equation\/ {\rm(\ref{fma})} -- {\rm(\ref{anz})} on a
real-an\-a\-lyt\-ic K\"ah\-ler manifold\/ $\,(M\nh,g)\,$ of real dimension
four such that, with\/ $\,\y=\nabla\nh\vt\,$ and\/ $\,Q=g(\y,\y)$, one has\/
$\,dQ\wedge d\vt\ne0\,$
somewhere in\/ $\,M\,$ and\/ $\,J\y\,$ lies in a two\hs-di\-men\-sion\-al
Abel\-i\-an Lie algebra of Kil\-ling fields.

For\/ $\,(2,1)\,$ and\/ $\,(2/\nh\vt,0)\,$ one can choose\/
$\,(M\nh,g)\,$ to
be compact and bi\-hol\-o\-mor\-phic to the two-point blow-up of\/ 
$\,\bbCP^2\nh$, with\/ $\,g\,$ which is
the Wang\hs-\hn Zhu toric \hbox{K\"ah\-ler\hs-} Ric\-ci sol\-i\-ton\/
{\rm\cite[Theorem 1.1]{wang-zhu}} or, respectively, 
the Chen-LeBrun-\nh Weber con\-for\-mal\-ly\hs-\hn Ein\-stein K\"ahler
metric\/ {\rm\cite[Theorem A]{chen-lebrun-weber}}.
\end{theorem}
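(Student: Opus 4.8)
The plan is to realize the two ``standard'' pairs $(\az,\ve)=(2,1)$ and $(2/\vt,0)$ by known compact metrics, and the three ``nonstandard'' pairs by the Cartan-K\"ahler theorem. A non-Einstein gradient K\"ahler-Ricci soliton satisfies $\rz+\nabla df=\la g$ with a constant $\la$, so $\vt=f/2$ turns this into (\ref{fma}) with $\az=2$ and $\sz=\la$, while (\ref{app}) gives $\ve=1$; a K\"ahler metric conformal to an Einstein metric $\hg=g/\vt^2$ gives (\ref{fma}) with $\az=(\nz-2)/\vt=2/\vt$ and $\ve=0$. The Wang-Zhu soliton and the Chen-LeBrun-Weber metric on the two-point blow-up of $\bbCP^2$ realize these two pairs, and both are real-analytic. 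Each is toric, hence carries a two-dimensional Abelian algebra of Killing fields; since $\vt$ is torus-invariant, $\y=\nabla\vt$ is normal to the orbits, so the K\"ahler condition makes $J\y$ orbit-tangent and thus a member of that algebra. For $dQ\wedge d\vt\ne0$ somewhere: for the soliton the conserved quantity $\se+4Q-4\la\vt$ gives $4\,dQ\wedge d\vt=-\,d\se\wedge d\vt$, nonzero since the soliton, being genuinely of cohomogeneity two, has $\se$ functionally independent of $\vt$; in the conformally-Einstein case $\az\,d\az\ne0$, so Theorem~\ref{spkrp}(iii) shows that $dQ\wedge d\vt\equiv0$ would make the metric a special K\"ahler-Ricci potential, which the Chen-LeBrun-Weber metric is not.

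For the three nonstandard pairs I would invoke Cartan-K\"ahler. By Theorem~\ref{types}, the relations (\ref{app})--(\ref{cns}) already fix the profile: with $\az=\az(\vt)$ the chosen real-analytic function satisfying (\ref{app}) and $F=F(\vt)$ from (\ref{fcf}), equation (\ref{cns}) expresses $Y$ and $\sz$ through $\vt$ and $Q$. I encode the remaining structure as an exterior differential ideal $\mathcal I$ on a finite-dimensional manifold $P$ parametrizing an adapted orthonormal coframe $(e^1,e^2,e^3,e^4)$ with $Je^1=e^2$, $Je^3=e^4$ and $d\vt=\sqrt Q\,e^1$, together with the connection forms $\omega^i_j$, the curvature, and the values of $\vt$ and $Q$. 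The generators of $\mathcal I$ are the first structure equations $de^i+\omega^i_j\wedge e^j=0$, the K\"ahler relations making $J$ parallel, the second structure equations, the equation $\az\,\nabla d\vt+\rz=\sz g$, and the profile relations (\ref{cts})--(\ref{cns}). To build in the required two-dimensional Abelian Killing algebra I impose invariance under $J\y$ -- Killing by (\ref{imp}) -- and under a second commuting field, so that $\mathcal I$ descends to an EDS over the two-dimensional base with coordinates $(\vt,Q)$, on which $dQ\wedge d\vt\ne0$ holds by construction.

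The decisive step is Cartan's involutivity test for $\mathcal I$: at a generic integral element one computes the tableau, checks that the torsion is absorbable, reads off the Cartan characters $s_1\ge s_2$, and verifies that the integral $2$-planes form a space of dimension $s_1+2s_2$. With the data real-analytic, involutivity lets Cartan-K\"ahler produce a real-analytic integral manifold, and reassembling the two Abelian Killing directions yields a real-analytic K\"ahler four-manifold realizing the prescribed $\az(\vt)$, with $dQ\wedge d\vt\ne0$ and the Abelian Killing algebra containing $J\y$. I expect the main obstacle to be the torsion: arranging $P$ and $\mathcal I$ so that all torsion is absorbed into the connection and curvature freedom, which hinges on the interplay of (\ref{app})--(\ref{cns}) with the K\"ahler curvature decomposition in dimension four and the second Bianchi identity, and on confirming that each profile $2\tanh\vt$, $2\coth\vt$, $2\cot\vt$ leaves a nonempty space of integral elements. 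Once the torsion vanishes, the residual tableau is small, and its involutivity follows from a direct character count.
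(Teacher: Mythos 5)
Your overall strategy matches the paper's at the top level: the compact Wang--Zhu and Chen--LeBrun--Weber metrics handle $(2,1)$ and $(2/\vt,0)$, and the Car\-tan--K\"ah\-ler theorem is the engine for the remaining profiles. But the second half of your argument is a plan rather than a proof, and the plan omits precisely the step that carries all the difficulty. You set up an exterior differential system on a coframe-bundle-type space carrying the connection forms and the curvature, and then write that you ``expect'' the torsion to be absorbable and that one must still ``confirm'' that each profile leaves a nonempty space of integral elements. Nothing in your text verifies involutivity, computes the Cartan characters, or exhibits a single integral element --- and these are exactly the assertions that constitute the theorem. The paper avoids your large tableau entirely by first exploiting the two commuting Killing fields at the level of the K\"ah\-ler potential: writing $2\phi=2\phi(x,u)$, passing to the variables $(\vt,\lambda)=(\phi_x,\phi_u)$, and proving (Theorem~\ref{equiv}) that the special Ric\-ci-Hess\-i\-an equation with $dQ\wedge d\vt\ne0$ is \emph{equivalent} to the explicit first-order quasi-linear system (\ref{foe}) with (\ref{det}) for four unknown functions of two variables. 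Involutivity then reduces to concrete linear algebra (Lemma~\ref{onlyz}, Theorem~\ref{affsb}): every horizontal integral line extends to a unique integral plane, after which Car\-tan--K\"ah\-ler applies with nothing left to the reader. Without an analogue of this reduction, or an actual torsion/character computation for your frame-bundle system, the existence of the three nonstandard examples is not established.

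There are also two smaller gaps in the compact cases. For the Wang--Zhu soliton you derive $4\,dQ\wedge d\vt=-\,d\hh\se\wedge d\vt$ from the soliton identity and then assert that $\se$ is functionally independent of $\vt$ because the metric is ``genuinely of cohomogeneity two''; both $\se$ and $\vt$ are torus-invariant, so cohomogeneity two does not by itself make them independent, and this independence is essentially equivalent to what you are trying to prove. For Chen--LeBrun--Weber you assert without argument that $\vt$ is not a special K\"ah\-ler-Ric\-ci potential. The paper settles both points with one cited classification: a compact K\"ah\-ler surface admitting a nontrivial hol\-o\-mor\-phic gradient $\nabla\vt$ with $dQ\wedge d\vt\equiv0$ must be bi\-hol\-o\-mor\-phic to $\bbCP^2$ or to a $\bbCP^1$-bundle over $\bbCP^1$, hence not to the two-point blow-up of $\bbCP^2$. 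Finally, ``both are real-analytic'' needs justification for the Chen--LeBrun--Weber metric, which is defined only as conformally Einstein; the paper obtains this from real-analyticity of Einstein metrics together with the canonical form of the conformal factor.
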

In contrast with the final clause of Theorem~\ref{reali}, we do not know
whether the remaining three options, $\,(2\hn\tanh\vt,1)$,
$\,(2\hn\coth\vt,1)\,$ and $\,(2\hn\cot\vt,-\nnh1)$, may be realized on a {\it
compact\/} K\"ah\-ler surface. An an\-a\-lyt\-ic-con\-tin\-u\-a\-tion
phenomenon described below (Section~\ref{ac}) may hint at plausibility of 
trying to obtain such compact examples via small 
deformations of the Wang-Zhu or Chen-LeBrun-Weber metric, combined with
suitable af\-fine modifications.

For the pairs $\,(2,1)\,$ and $\,(2/\nh\vt,0)\,$ in Theorem~\ref{types}, the 
constancy conclusions of (\ref{cts}) are well known: \cite[p.\,201]{chow}, 
\cite[p.\,417,\,Prop.\,3(i) and p.\,419,\,formula\,(40)]{derdzinski-83}.

The paper is organized as follows. Section~\ref{pr} contains the
preliminaries. Consequences of special 
Ric\-ci-\nh Hess\-i\-an equations, leading to proofs of Theorems~\ref{apppa},
\ref{spkrp} and~\ref{types}, are  presented in the next two sections.
Sections~\ref{lk} through~\ref{pt} are devoted to proving Theorem~\ref{reali}:
we rephrase it as solvability of the system (\ref{foe}) of qua\-si-lin\-e\-ar
first-or\-der partial differential equations, subject to the additional
conditions (\ref{det}), 
which allows us to derive our claim from the Car\-tan-K\"ah\-ler
theorem for exterior differential systems.

\renewcommand{\thetheorem}{\thesection.\arabic{theorem}}
\section{Preliminaries}\label{pr}
\setcounter{equation}{0}
All manifolds and Riemannian metrics are assumed to be of class 
$\,C^\infty\nnh$. By definition, a manifold is connected. We use the symbol
$\,\dv\,$ for divergence.

On a manifold with a tor\-sion-free connection $\,\nabla\nh$, the Ric\-ci 
tensor $\,\mathrm{r}\,$ satisfies the Boch\-ner identity 
$\,\mathrm{r}(\,\cdot\,,\y)=\hs\dv\hskip1.2pt\nabla\nh \y
-\hs d\hs[\dv\hs \y]$, where $\,\y\,$ is any vector field. 
Its coordinate form $\,R\nh_{jk}\w \y^{\hs k}\nh=\y^{\hs k}{}_{,\hs jk}\w\nh
-\y^{\hs k}{}_{,\hs kj}\w$ arises via contraction from the Ric\-ci identity 
$\,\y^{\hs l}{}_{,\hs jk}\w\nh
-\y^{\hs l}{}_{,\hs kj}\w\nh=R\nh_{jkq}\w{}^l\hs \y\hh^q\nh$. (We use the sign 
convention for $\,R\,$ such that 
$\,R\nh_{jk}\w\nh=R\nh_{jqk}\w{}^q\nh$.) Applied to the gradient $\,\y\,$ of a 
function $\,\vt\,$ on a Riemannian manifold, this yields
\begin{equation}\label{rcd}
\dv\hh[\nabla\nh d\vt]\,=\,\mathrm{r}\hs(\y,\,\cdot\,)\,\,
+\hs\,dY\nh,\hskip12pt\mathrm{\ with\ }\,\,\y=\nabla\nh\vt\,\,\mathrm{\ and\ 
}\,\,Y\nh=\Delta\hn\vt.
\end{equation}
On the other hand, given a function $\,\vt\,$ on a Riemannian manifold,
\begin{equation}\label{nvv}
2[\nabla\nh d\vt](\y,\,\cdot\,)\,=\,dQ\nh,\hskip12pt\mathrm{\ where\ 
}\,\,\y=\nabla\nh\vt\,\,\mathrm{\ and\ }\,\,Q=g(\y,\y)\hh,
\end{equation}
as one sees noting that, in local coordinates,
$\,(\vt\nnh_{\nh,\hs k}\w\vt^{\hh,\hh k})\nh_{,\hh j}\w
=2\vt\nnh_{\nh,\hs kj}\w\vt^{\hh,\hh k}$. We can rewrite the relations
(\ref{rcd})
-- (\ref{nvv}) using the interior product $\,\imath_\y\w$, and then they read
\begin{equation}\label{rwr}
\mathrm{a)}\hskip6pt\dv\hh[\nabla\nh d\vt]\,=\,\imath_\y\w\mathrm{r}\hs\,
+\hs\,dY\nh,\hskip12pt\mathrm{b)}\hskip6pt
2\imath_\y\w[\nabla\nh d\vt]\,=\,dQ\hh,\hskip12pt\mathrm{\ with\ (\ref{ven}).}
\end{equation}
Finally, for the Ric\-ci tensor 
$\,\mathrm{r}\,$ and scalar curvature $\,\se\,$ of any Riemannian metric,
\begin{equation}\label{brt}
2\hs\delta\mathrm{r}\,=\,d\hh\se\hh,
\end{equation}
which is known as the Bian\-chi identity for the Ric\-ci tensor. Its
coordinate form $\,2g^{kl}R\nh_{j\nh k,l}\w=s_{\nh,\hs j}\w$ 
is immediate if one transvects with (``multiplies'' by) $\,g^{kl}$ the
equality $\,R\nh_{j\nh kl}\w{}^q{}_{,\hs q}\w
=R\nh_{jl,k}\w-R\nh_{kl,j}\w$ obtained by contracting the second Bian\-chi 
identity.

The {\it har\-mon\-ic-flow condition\/} for a vector field $\,\y\,$ on a 
Riemannian manifold $\,(M\nh,g)$, meaning that the flow of $\,\y\,$ consists of
(local) harmonic dif\-feo\-mor\-phisms, is known \cite{nouhaud} to be
equivalent to the equation
\begin{equation}\label{hfc}
g(\Delta\y,\,\cdot\,)\,=\,-\mathrm{r}\hh(\y,\,\cdot\,)
\end{equation}
the vector field $\,\Delta\y\,$ having the local components
$\,[\Delta\y]^j\nh=\y^{j,k}{}_k\w$. See also
\cite[Theorem 3.1]{dodson-trinidad-perez-vazquez-abal}. When
$\,\y=\nabla\nh\vt\,$ is the gradient of a function $\,\vt:M\to\bbR$,
\begin{equation}\label{hfa}
\mathrm{the\ har\-mon\-ic}\hyp\mathrm{flow\ condition\ (\ref{hfc})\ amounts\
to\ (\ref{trv}).}
\end{equation}
In fact, by (\ref{rcd}),
$\,2\mathrm{r}\hs(\y,\,\cdot\,)+\hs dY\nh=\dv\hh[\nabla\nh d\vt]
+\mathrm{r}\hs(\y,\,\cdot\,)=g(\Delta\y,\,\cdot\,)
+\mathrm{r}\hh(\y,\,\cdot\,)$,
as $\,[\Delta\y]_j\w=\y_{j,k}\w{}^k\nh=\vt\nnh_{,jk}\w{}^k\nh
=\vt\nnh_{,kj}\w{}^k\nh=\vt^{,kj}{}_k\w=(\dv\hh[\nabla\nh d\vt])\hn_j\w$.

On the other hand -- see, e.g., \cite[Lemma 5.2]{derdzinski-maschler-03} -- 
on a K\"ah\-ler manifold $\,(M\nh,g)$,
\begin{equation}\label{rho}
\begin{array}{l}
\mathrm{conditions\ (\ref{fma})\hskip-2.6pt-\hskip-2.6pt(\ref{anz})\ imply\ 
real}\hyp\mathrm{hol\-o\-mor\-phic\-i\-ty\ of\ the}\\
\mathrm{gradient\ }\y\hn=\nnh\nabla\nh\vt\mathrm{,\hskip1ptwhile\
}J\hn\y\mathrm{\ is\ then\ a\ hol\-o\-mor\-phic\ Kil\-ling}\\
\mathrm{field,\ due\ to\ the\ resulting\ Her\-mit\-i\-an\ symmetry\ of\
}\,\nabla\nh d\vt\hh.
\end{array}
\end{equation}
Since 
hol\-o\-mor\-phic mappings between
K\"ah\-ler manifolds are harmonic, every hol\-o\-mor\-phic vector field on a
K\"ah\-ler manifold satisfies (\ref{hfc}), cf.\ 
\cite[Remark 3.2]{dodson-trinidad-perez-vazquez-abal}. Now (\ref{imp})
follows from (\ref{hfa}). In other words, as observed by Calabi
\cite{calabi}, on K\"ah\-ler manifolds one has
\begin{equation}\label{rhg}
\mathrm{equation\ (\ref{trv}),\ with\ (\ref{ven}),\ for\ all\ 
real}\hyp\mathrm{hol\-o\-mor\-phic\ gradients\ }\,\y=\nabla\nh\vt\hh.
\end{equation}
Given a tensor field $\,\theta\,$ on a manifold 
$\,M\nh$, we say that a point $\,x\in M\,$ is $\,\theta${\it-ge\-ner\-ic\/} 
if $\,x\,$ has a neighborhood on which either $\,\theta=0$ identically, or 
$\,\theta\ne0\,$ everywhere. Such points clearly form a 
dense open sub\-set of $\,M\nh$.

\section{Ric\-ci\hh-\nh Hess\-i\-an equations}\label{rh}
\setcounter{equation}{0}
As a consequence of (\ref{fma}), for the scalar curvature $\,\se$, with
(\ref{ven}),
\begin{equation}\label{nec}
\nz\hh\sz\,=\,Y\hskip-2.3pt\az\,+\,\hs\se\hh,\mathrm{\ \ where\ \ }\,\nz=\dim M.
\end{equation}
Applying $\,2\hh\imath_\y\w$ or $\,2\hh\dv\,$ to 
(\ref{fma}), we obtain, from (\ref{rwr}) -- (\ref{brt}) and (\ref{ven}),
\begin{equation}\label{adq}
\begin{array}{rl}
\mathrm{i)}&
\az\hs dQ\,+\,2\hh\mathrm{r}\hh(\y,\,\cdot\,)\,=\,2\sz\hs d\vt\hh,\\
\mathrm{ii)}&
2[\nabla\nh d\vt](\nabla\nnh\az,\,\cdot\,)\,
+\,2\az\hs[\mathrm{r}\hh(\y,\,\cdot\,)\,+\hs\,dY]\,+\hs\,d\hs\se\,
=\,2\hs d\sz\hh.
\end{array}
\end{equation}
In the case where (\ref{fma}) -- (\ref{anz}) hold along with (\ref{trv}), one
may rewrite (\ref{adq}) as
\begin{equation}\label{dqy}
\begin{array}{rl}
\mathrm{i)}&
\az\hs dQ\,\hh-\,dY\hs=\,2\sz\hs d\vt\hh,\\
\mathrm{ii)}&
2[\nabla\nh d\vt](\nabla\nnh\az,\,\cdot\,)\,
+\,\az\hs dY\hs+\hs\,d\hs\se\,=\,2\hs d\sz\hh,
\end{array}
\end{equation}
which, in view of (\ref{nvv}) and (\ref{nec}), amounts to nothing else than
\begin{equation}\label{dqm}
\begin{array}{rl}
\mathrm{i)}&
d(Q\az\,-\,Y)\,=\,(Q\az'\hs+\,2\sz)\hs d\vt\hh,\\
\mathrm{ii)}&
d\hh[Q\az'\hs+\,(\nz-2)\hh\sz]\,=\,(Q\az''\hs+\,Y\hskip-2.3pt\az')\hs d\vt\hh,
\end{array}
\end{equation}
as the assumption, in (\ref{anz}), that $\,\az\,$ is a 
$\,C^\infty\nnh$ function of $\,\vt\,$ allows us to write
\begin{equation}\label{ape}
d\az\,=\,\az'\nh d\vt\hh,\quad\nabla\nnh\az\,
=\,\az'\nh\y\hh,\quad2[\nabla\nh d\vt](\nabla\nnh\az,\,\cdot\,)\,
=\az'\hn dQ\hh,\quad\mathrm{where\ \ }\,\az'\nh=\hs d\az/d\vt\hh,
\end{equation}
since (\ref{nvv}) gives 
$\,2[\nabla\nh d\vt](\nabla\nnh\az,\,\cdot\,)
=2\az'[\nabla\nh d\vt](\y,\,\cdot\,)=\az'\hn dQ$. 
Due to (\ref{dqm}), conditions (\ref{fma}) -- (\ref{trv}) imply that, 
locally, at points at which $\,d\vt\ne0$,
\begin{equation}\label{fot}
\begin{array}{l}
Q\az\hs-\hs Y\hs\mathrm{\ and\ }\hs\,Q\az'+\hs(\nz-2)\hh\sz\,\mathrm{\ are\ 
functions\ of\ }\,\vt\mathrm{,\ with}\\
\mathrm{the\ respective\ }\,\vt\hyp\mathrm{derivatives\
}\,Q\az'+\hs2\sz\,\mathrm{\ and\ }\,Q\az''+\,Y\hskip-2.3pt\az'\\
\mathrm{which,\nnh\ consequently\nnh,\nnh\ must\hn\ themselves\hn\ be\hn\
functions\hn\ of\hn\ }\hs\vt\nh.
\end{array}
\end{equation}
\begin{proof}[Proof of Theorem~\ref{apppa}]At the points in question, using
(\ref{fot}) to equate both $\,Q\az-Y\hs$ and $\,Q\az''\nh+\,Y\hskip-2.3pt\az'$ to some 
specific functions of $\,\vt$, we obtain a system of two linear equations with 
the nonzero determinant $\,\az''\nh+\az\az'\nh$, imposed on the unknowns
$\,Q,\nh Y\nh$, and our assertion follows since $\,\az''\nh+\az\az'$ is also 
a function of $\,\vt$.
\end{proof}
Assuming only (\ref{fma}), for $\,\nz=\dim M\nh$, with the aid of (\ref{nec})
we rewrite (\ref{adq}) as
\[
\begin{array}{l}
\nz\hh[\az\hs dQ\,+\,2\hh\mathrm{r}\hh(\y,\,\cdot\,)]\,
-\,2(Y\hskip-2.3pt\az\,+\,\hs\se)\hs d\vt\,=\,0\hh,\\
2\nz\hh\{\nh[\nabla\nh d\vt](\nabla\nnh\az,\,\cdot\,)
+\nh\az\hh\mathrm{r}\hh(\y,\,\cdot\,)\}\nh
+\nh2[(\nz-1)\az\hs dY\nnh-\nh Y\nh d\az]\nh+\nh(\nz-2)\hs d\hh\se=0\hh,
\end{array}
\]
If (\ref{trv}) holds as well, replacing $\,2\hh\mathrm{r}\hh(\y,\,\cdot\,)\,$
here with $\,-\hh dY\hs$ we obtain 
$\,\nz\hh(\az\hs dQ-\hs dY)
-2(Y\hskip-2.3pt\az+\hs\se)\hs d\vt=0\,$ and 
$\,2\nz\hh[\nabla\nh d\vt](\nabla\nnh\az,\,\cdot\,)\nh+(\nz-2)(\az\hs dY\nnh
+d\hh\se)-\nh2Y\nh d\az=0$. 
Thus, when (\ref{fma}) -- (\ref{trv}) are all satisfied, (\ref{ape}) gives
\begin{equation}\label{apd}
\begin{array}{rl}
\mathrm{a)}&
\nz\hh(\az\hs dQ\,-\hs\,dY)\,
-\,2(Y\hskip-2.3pt\az\,+\,\hs\se)\hs d\vt\,=\,0\hh,\\
\mathrm{b)}&
\nz\hh\az'\nh dQ\,
+\,(\nz-2)(\az\hs dY\hs+\,\hs d\hh\se)\,-\,2Y\hskip-2.3pt\az'\nh d\vt\,
=\,0\hh.
\end{array}
\end{equation}

\section{Ric\-ci\hh-\nh Hess\-i\-an equations on K\"ah\-ler 
manifolds}\label{km}
\setcounter{equation}{0}
The goal of this section is to prove Theorems~\ref{spkrp} and~\ref{types}.

In any complex manifold, $\,d\hh\omega=0\,$ and 
$\,\omega\hh(J\,\cdot\,,\,\cdot\,)\,$ is symmetric if 
$\,\omega=i\hh\partial\overline{\partial}\hs\vt$, that is, if 
$\,2\hs\omega=-\hh d\hskip1pt[\hn J^*\nnh d\vt]\,$ for a 
real-val\-ued function $\,\vt$, with the $\,1$-form 
$\,J^*\nnh d\vt=(d\vt)\hn J$  
which sends any tangent vector field $\,\y\,$ to $\,d\nnh_{J\nh\y}\w\vt$. 
Our ex\-te\-ri\-or-de\-riv\-a\-tive and 
ex\-te\-ri\-or-prod\-uct conventions, for $\,1$-forms 
$\,\xi,\xi'\,$ and vector fields $\,u,\y$, are
\begin{equation}\label{wdg}
\begin{array}{l}
[d\hh\xi](u,\y)\,=\,d_u\w[\hh\xi(\y)]\,-\,d_\y\w[\hh\xi(u)]\,-\,\xi([u,\y])\hh,\\
{}[\xi\wedge\xi'](u,\y)\,=\,\xi(u)\xi'(\y)\,-\,\xi(\y)\xi'(u)\hh.
\end{array}
\end{equation}
For a tor\-sion\-free connection $\,\nabla\nh$, (\ref{wdg}) gives
$\,[d\hh\xi](u,\y)=[\nabla\nh\!_u\w\xi](\y)-[\nabla\nh\!_\y\w\xi](u)$, so
that, if in addition $\,\nabla\nnh J=0$, on an al\-most-com\-plex manifold,
\begin{equation}\label{idd}
2i\hh\partial\overline{\partial}\hs\vt\hs\,
=\,\hs[\nabla\nh d\vt](J\,\cdot\,,\,\cdot\,)\hs\,
-\,\hs[\nabla\nh d\vt](\,\cdot\,,J\,\cdot\,)\hh.
\end{equation}
In the case of a K\"ah\-ler metric $\,g\,$ on a complex manifold $\,M\nh$, 
(\ref{fma}) implies that
\begin{equation}\label{add}
i\hh\az\hskip1pt\partial\overline{\partial}\hs\vt\hs\,
+\,\rho\hs\,=\,\sz\kfo\hh,
\end{equation}
$\kfo,\rho\,$ being the K\"ah\-ler and Ric\-ci forms,
with both terms on the right-hand side of (\ref{idd}) equal, as 
Her\-mit\-i\-an symmetry of $\,\nabla\nh d\vt\,$ follows 
from those of $\,\rho\,$ and $\,\kfo$.
\begin{remark}\label{eqhol}
As an obvious consequence of the last line in (\ref{rho}), 
if $\,g\,$ is a K\"ah\-ler metric, conditions
(\ref{fma}) -- (\ref{anz}) are 
{\it equivalent\/} to (\ref{add}) along with (\ref{anz}) and 
real-hol\-o\-mor\-phic\-i\-ty of the gradient $\,\y=\nabla\nh\vt$.
\end{remark}
\begin{remark}\label{injct}For the K\"ah\-ler form $\,\kfo\,$ of a K\"ah\-ler 
manifold $\,(M\nh,g)\,$ of real dimension $\,\nz\ge4$, the operator 
$\,\zeta\mapsto\zeta\wedge\hskip1pt\kfo\,$ acting on differential $\,q$-forms
is injective if $\,q=2\,$ and $\,\nz>4$, or $\,q=1$. Namely, the contraction
of $\,\zeta\wedge\hskip1pt\kfo\,$ against $\,\kfo\,$ yields a nonzero constant
times $\,(\nz-4)\hs\zeta+2\langle\kfo,\zeta\rangle\hh\kfo\,$ (if
$\,q=2$), or times $\,(\nz-2)\hs\zeta\,$ (if $\,q=1$). In the former case,
$\,\zeta\,$ with $\,\zeta\wedge\hskip1pt\kfo=0\,$ is thus a multiple
of $\,\kfo$, and hence $\,0$.  
\end{remark}
\begin{remark}\label{ctsgm}Whenever (\ref{add}) with a {\it constant\/}
$\,\az\,$ holds on a K\"ah\-ler manifold of real dimension $\,\nz\ge4$,
constancy of $\,\sz\,$ follows (from Remark~\ref{injct}, as
$\,d\sz\wedge\hskip1pt\kfo=0$).
\end{remark}
We have the following result, due to Maschler \cite[Proposition 3.3]{maschler}.
\begin{lemma}\label{dadsz}Condition\/ {\rm(\ref{fma})} on a K\"ah\-ler 
manifold\/ $\,(M\nh,g)\,$ of real dimension\/ $\,\nz>4\,$ implies that\/ 
$\,d\sz\wedge d\az=0$. Equivalently, wherever\/ $\,d\az\,$ is nonzero, 
$\,\sz$ must, locally, be a function of\/ $\,\az$.
\end{lemma}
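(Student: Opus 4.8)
The plan is to pass from the tensorial hypothesis to its Kähler-form reformulation, where the geometry becomes a statement about closed $2$-forms. As recorded in (\ref{add}), condition (\ref{fma}) on a Kähler manifold gives
\[
i\az\,\partial\overline{\partial}\vt\,+\,\rho\,=\,\sz\kfo,
\]
an identity relating the three real $2$-forms $\beta=i\,\partial\overline{\partial}\vt$, the Ricci form $\rho$, and the Kähler form $\kfo$. The decisive structural input is that all three are closed: $\beta$ is exact, being a constant multiple of $d\hskip1pt[J^*\nnh d\vt]$, while $d\rho=0$ and $d\kfo=0$ hold on any Kähler manifold. So I would begin by rewriting the reformulated equation as $\az\beta+\rho=\sz\kfo$.

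Next I would differentiate. Applying $\,d\,$ and using $\,d\beta=d\rho=d\kfo=0\,$ kills every term produced by the product rule on the closed factors, leaving the $3$-form identity
\[
d\az\wedge\beta\,=\,d\sz\wedge\kfo.
\]
The key manoeuvre is then to eliminate the unknown form $\beta$ by wedging both sides with $\,d\az$: on the left $\,d\az\wedge d\az=0$, so the left side dies, and we are left with $\,0=d\az\wedge d\sz\wedge\kfo=-(d\sz\wedge d\az)\wedge\kfo$. Thus the $2$-form $\zeta=d\sz\wedge d\az$ satisfies $\zeta\wedge\kfo=0$.

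Finally I would apply Remark~\ref{injct}: for $\,\nz>4\,$ the operator $\,\zeta\mapsto\zeta\wedge\kfo\,$ on $2$-forms is injective, so $\,\zeta\wedge\kfo=0\,$ forces $\,d\sz\wedge d\az=\zeta=0$. I expect the dimension restriction to be the only genuine constraint in the argument: injectivity on $2$-forms is exactly what requires $\,\nz>4$, and for $\,\nz=4\,$ the same computation yields only that $\,\zeta\,$ is primitive, not that it vanishes. The equivalent formulation is then immediate, since $\,d\sz\wedge d\az=0\,$ says that $\,d\sz\,$ is pointwise proportional to $\,d\az$, so wherever $\,d\az\ne0\,$ the function $\,\sz\,$ is constant on the level sets of $\,\az\,$ and hence, locally, a function of $\,\az$.
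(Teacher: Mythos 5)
Your proposal is correct and follows essentially the same route as the paper: differentiate the K\"ah\-ler-form reformulation (\ref{add}) using closedness of $\,\rho$, $\,\kfo\,$ and $\,i\hh\partial\overline{\partial}\hs\vt$, wedge with $\,d\az\,$ to obtain $\,d\az\wedge d\sz\wedge\kfo=0$, and conclude via the injectivity statement of Remark~\ref{injct} for $2$-forms, which is precisely where $\,\nz>4\,$ is needed.
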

\begin{proof}By (\ref{add}), $\,0=d\rho=d\sz\wedge\hskip1pt\kfo
-d\az\wedge i\hh\hskip1pt\partial\overline{\partial}\hs\vt$, so that 
$\,d\az\wedge d\sz\wedge\hskip1pt\kfo=0$, and our assertion is immediate from 
Remark~\ref{injct}.
\end{proof}
\begin{proof}[Proof of Theorem~\ref{spkrp}]In all three cases (i) -- (iii),
$\,d\sz\wedge d\vt=0$. For (i), this follows from Lemma~\ref{dadsz} while, 
when $\,dQ\wedge d\vt=0\,$ on 
$\,\,U\nh$, we see that, in view of the equality 
$\,\az'\nh dQ+\az\hs dY\nh=d(2\sz-\se)\,$ arising from (\ref{dqy}.ii) and
(\ref{ape}), $\,Y$ and $\,2\sz-\se$ are, 
locally, functions of $\,\vt$, and hence so is $\,\sz$, as a consequence of
(\ref{nec}) with $\,\nz\ge4$. 
Now \cite[Corollary 9.2]{derdzinski-maschler-03} yields our claim.
\end{proof}
\begin{proof}[Proof of Theorem~\ref{types}]As $\,dQ\wedge d\vt\ne0\,$
everywhere in $\,\,U\nh$, Theorem~\ref{apppa} implies (\ref{app}) and,
consequently, also the final clause about the five possible pairs.

Next, in (\ref{app}),
$\,4\hs d\tz=2\hs d\hs[\az\hs\se+(2\az'\nh+\az^2)Y]
=2[\az\hs d\hh\se+\se\hskip1.2ptd\az+(2\az'\nh+\az^2)dY]$ which, as 
$\,\nz=4$, equals, in view of (\ref{ape}),
\[
\az[\nz\hh\az'\nh dQ+(\nz-2)(\az\hs dY\nh+\hs d\hh\se)
-2Y\hskip-2.3pt\az'\nh d\vt]\,
-\,\az'[\nz\hh(\az\hs dQ-\hh dY)-2(Y\hskip-2.3pt\az+\hs\se)\hs d\vt]\hh,
\]
and hence vanishes due to (\ref{apd}). On the other hand, the 
function $\,\gz\,$ of $\,\vt\,$ defined in the theorem is an antiderivative of
$\,1/\az^2\nh$, meaning that
\begin{equation}\label{ant}
\gz'\hs=\,1/\az^2.
\end{equation}
Namely, by (\ref{app}). 
$\,4\ve\hh\gz'\nh=1+2\az'\nh/\nh\az^2\nh=(2\az'\nh+\az^2)/\nh\az^2\nh
=4\ve/\nh\az^2$ if $\,\ve\ne0$, and 
$\,3\gz'\nh=-6\az'\nh/\nh\az^4\nh=3/\nh\az^2$ when $\,\ve=0$, as
$\,2\az'\nh=-\az^2\nh$.

Furthermore, $\,d\hs(\tz\gz+\az^{-\nnh1}Y\nh-\hs Q)=0$. 
In fact, $\,d\az=\az'\nh d\vt\,$ in (\ref{ape}), and similarly for
$\,\gz$, so that, from (\ref{ant}), 
$\,d\hs(\tz\gz)=\tz\hs d\hh\gz=\tz\gz'\nh d\vt=\tz\az^{-\nh2}d\vt$, 
and $\,\az\hs d\hs(\nh\az^{-\nnh1}Y\hn)
=dY\nh-\az^{-\nnh1}Y\hskip-2.3pt\az'\nh d\vt$. Also, 
$\,2(\tz-Y\hskip-2.3pt\az')=(Y\hskip-2.3pt\az+\hs\se\hn)\hh\az$ from
(\ref{app}) -- (\ref{cts}).
These relations yield 
$\,-\nh4\az\hs d\hs(\tz\gz+\az^{-\nnh1}Y\nh-\hs Q)
=4[(\nh\az\hs dQ-\hh dY\hn)-(\tz-Y\hskip-2.3pt\az')\hh\az^{-\nnh1}\nh d\vt]
=\nz\hh(\az\hs dQ-\hh dY\hn)-2(Y\hskip-2.3pt\az+\hs\se)\hs d\vt$, with 
$\,\nz=4$, which equals $\,0\,$ by (\ref{apd}.a).

Finally, (\ref{fcf}) and the second relation in (\ref{cts}) easily give 
(\ref{cns}.a). Thus, by (\ref{ven}) and (\ref{ape}),
$\,(\jq\az'\nh+F')\jq=(\jq\az'\nh+F')\hs d_\y\w\vt=\jq d_\y\w\az+d_\y\w F\,$
which, due to (\ref{cns}.a), equals $\,d_\y\w Y\nnh-\az\hs d_\y\w\jq$. 
At the same time, $\,-\imath_\y\w$ applied to (\ref{dqy}.i) yields
$\,d_\y\w Y\nnh-\az\hs d_\y\w\jq=-\nh2\jq\sz$. We thus get 
(\ref{cns}.b). To obtain (\ref{cns}.c), note that, from (\ref{ven}), 
$\,\Delta\az=Q\az''+\hs Y\hskip-2.3pt\az'$ which, by (\ref{app}) and (\ref{cns}.a),
equals $\,(Y\nnh-Q\az)\az'\nh=F\nh\az'\nh=-\nh F''\nh$, where the last
equality trivially follows from (\ref{fcf})
\end{proof}
Theorem~\ref{types} has a partial converse: if a nonconstant function
$\,\vt\,$
with real-hol\-o\-mor\-phic gradient $\,\y=\nabla\nh\vt$ on a K\"ah\-ler
surface $\,(M\nh,g)\,$ and a function $\,\az\,$ of the 
real variable $\,\vt\,$ satisfy (\ref{app}) and (\ref{cts}), then they must
also satisfy the Ric\-ci-\nh Hess\-i\-an equation (\ref{fma}) with $\,\sz\,$
given by (\ref{nec}) for $\,\nz=4$.

In fact, $\,b(\y,\,\cdot\,)=0$, where $\,b\,$
denotes the trace\-less Her\-mit\-i\-an symmetric $\,2$-ten\-sor field
$\,\az\nabla\nh d\vt+\rz-\sz\hn g$. Namely, (\ref{trv}) -- (\ref{imp}) and
(\ref{rwr}.b) yield
$\,4b(\y,\,\cdot\,)=2\az\,dQ-2\hs dY\nnh-4\sz\hs d\vt\,$ which, due to 
(\ref{nec}) and (\ref{ape}), equals
$\,2\az\,dQ-2\hs dY\nnh-(Y\hskip-2.3pt\az+\se)\hs d\vt$, and so
$\,-4\az\hs b(\y,\,\cdot\,)=
2\hs\az^2\nh d\hs(\tz\gz+\az^{-\nnh1}Y\nnh-Q)+
(\az\hs\se+4\ve Y\nnh-2\hh\tz)\hs d\vt$. (Note that, by (\ref{app}) and
(\ref{ant}), $\,4\ve=2\az'\nnh+\hs\az^2$ and 
$\,2\hs\az^2\nh d\hs(\tz\gz)=2\hh\tz\hs d\vt$.) Thus, $\,b=0$, since $\,b\,$ 
corresponds, via $\,g$, to a {\it com\-plex-lin\-e\-ar\/} bundle morphism 
$\,\tm\to\tm\nh$.

\section{The local K\"ah\-ler potentials}\label{lk}
\setcounter{equation}{0}
This and the following seven sections are devoted to proving
Theorem~\ref{reali}.

In an open set $\hs M\subseteq\bbR\nh^4\nh$ with the Cartesian coordinates
$\,x,x'\nh,u,u'$ arranged into the complex coordinates
$\,(x+ix'\nh,u+iu')\,$ for the complex plane $\,\bbC^2\nh=\bbR\nh^4$
carrying the standard complex structure $\,J$, one has $\,J^*\nnh dx=-\hh dx'$
and $\,J^*\nnh du=-\hh du'\nh$, so that, if a $\,C^\infty\nnh$ function
$\,f\,$ on $\,M\,$ only depends on $\,x\,$ and $\,u$, the relation
$\,2\hh i\hh\partial\hskip1.7pt\cro\nh f
=-\hh d\hskip1pt[\hn J^*\nnh d\hskip-.9ptf]$ yields, with subscripts
denoting partial differentiations, 
$\,2\hh i\hh\partial\hskip1.7pt\cro\nh f=
f\nh\nnh_{x\hn x}\w\hs dx\wedge dx'\nh
+f\nh\nnh_{x\hn u}\w(dx\wedge du'\nh+\hs du\wedge dx')
+f\nh\nnh_{u\hn u}\w\hs du\wedge du'\nh$, 
since $\,d\hskip-.9ptf=f\nh\nnh_x\w\hs dx+f\nh\nnh_u\w\hs du$. Furthermore, we
set
\begin{equation}\label{ved}
\y=\partial\hn_x\w\mathrm{\ \ and\ \ }\,\z=\partial\hn_u\w\quad\mathrm{(the\
real\ coordinate\ vector\ fields).}
\end{equation}
For the K\"ah\-ler metric $\,g\,$ on $\,M\,$ having the K\"ah\-ler form
$\,\kfo=2\hh i\hh\partial\hskip1.7pt\cro\hh\phi$, where the function
$\,\phi:M\to\bbR\,$ 
is assumed to depend on $\,x\,$ and $\,u\,$ only,
$\,2\phi\,$ is a K\"ah\-ler potential \cite[p.\,85]{besse} of $\,g$, and the
above formula for $\,2\hh i\hh\partial\hskip1.7pt\cro\nh f\nh$, with
$\,f=\phi$, becomes
\begin{equation}\label{two}
\kfo=\phi\nh_{x\hn x}\w\hs dx\wedge dx'\nh
+\phi\nh_{x\hn u}\w(dx\wedge du'\nh+\hs du\wedge dx')
+\phi\nh_{u\hn u}\w\hs du\wedge du'.
\end{equation}
Generally, for a skew-Her\-mit\-i\-an $\,2$-form
$\,\zeta=\jq\hs dx\wedge dx'\nh
+\js\hs(dx\wedge du'\nh+\hs du\wedge dx')
+\jp\hs du\wedge du'\,$ and the 
Her\-mit\-i\-an symmetric $\,2$-ten\-sor field $\,a\,$ with 
$\,\zeta=a(J\hs\cdot\,,\,\cdot\,)$ one has
$\,a=\jq(dx\otimes dx+\hs dx'\otimes dx')+\js\hs(dx\otimes du+\hs du\otimes dx
+dx'\otimes du'+\hs du'\otimes dx')+\jp(du\otimes du+du'\otimes du')$, due
to (\ref{wdg}), and so the components of $\,a\,$ relative to the coordinates
$\,(x,x'\nh,u,u')\,$ form the matrix
\begin{equation}\label{mtr}
\left[\begin{matrix}
\jq&0&\js&0\cr
0&\jq&0&\js\cr
\js&0&\jp&0\cr
0&\js&0&\jp\end{matrix}\right]\nnh\nh,\mathrm{\ \ with\ the\ determinant\
}\,(\jq\nh\jp\,-\,\js^2)^2\nh.
\end{equation}
When $\,a=g$, (\ref{two}) woth $\,\zeta=\kfo\,$ gives $\,(\jq,\js,\jp)
=(\phi\nh_{x\hn x}\w,\hs\phi\nh_{x\hn u}\w,\hs\phi\nh_{u\hn u}\w)$. Thus,
\begin{equation}\label{syl}
\phi\nh_{x\hn x}\w>0\,\mathrm{\ and\ }\,\jg>0\mathrm{,\ for\
}\,\jg\nh=\phi\nh_{x\hn x}\w\phi\nh_{u\hn u}\w\nh-\phi^{\hn2}_{\nh x\hn u},
\end{equation}
which amounts to Syl\-ve\-ster's criterion for positive definiteness of
$\,g$, namely, positivity of the upper left sub\-de\-ter\-mi\-nants of
(\ref{mtr}). From now on we set
\begin{equation}\label{set}
\begin{array}{l}
(\vt,\lambda,\jq,\js,\jp)=(\phi\nh_x\w,\phi\nh_u\w,\phi\nh_{x\hn x}\w,
\hs\phi\nh_{x\hn u}\w,\hs\phi\nh_{u\hn u}\w)\mathrm{,\hs\ so\hs\ that}\\
\jq\,>\,0\,\,\mathrm{\ and\ }\,\,\jg\,=\,\jq\nh\jp\,
-\,\js^2\hs>\,0\,\,\mathrm{\ due\ to\ (\ref{syl}).} 
\end{array}
\end{equation}
With $\,\mathrm{div},\hs\Delta\,$ denoting the $\,g$-di\-ver\-gence and
$\,g$-La\-plac\-i\-an, for $\,\vt,\lambda,\jq\,$ in (\ref{set}),
\begin{enumerate}
  \def\theenumi{{\rm\alph{enumi}}}
\item the functions $\,\vt,\hs\lambda\,$ have the hol\-o\-mor\-phic\
  $\,g$-gra\-di\-ents $\,\y=\partial\hn_x\w$ and $\,\z=\partial\hn_u\w$,
\item the other coordinate fields $\,J\y\,$ and $\,J\z\,$ are
hol\-o\-mor\-phic $\,g$-Kil\-ling fields,
\item $\jq=g(\y,\y)\,$ and $\,\Delta\hn\vt=\mathrm{div}\,\y
=[\hs\log\jg]_x\w$, while $\,\Delta\lambda
=\mathrm{div}\,\z=[\hs\log\jg]_u\w$.
\end{enumerate}
Namely, (\ref{ved}) and (\ref{mtr}) yield (a). Also, (b) 
follows since $\,\phi\,$ only depends on $\,x\,$ and $\,u$. Finally,
(\ref{mtr}) has the determinant $\,\jg^2\nh$, and so
$\,\jg\hs dx\wedge dx'\nh\wedge du\wedge du'$ is the volume form of $\,g$,
on which $\,\Lie\nh_\y\w,\Lie\nh_\z\w$ act -- see (\ref{ved}) -- via
partial differentiations $\,\partial\hn_x\w,\,\partial\hn_u\w$ of 
the $\,\jg\hs$ factor. Thus, 
$\,\mathrm{div}\,\y=[\hs\log\jg]_x\w$ and
$\,\mathrm{div}\,\z=[\hs\log\jg]_u\w$, cf.\
\cite[p.\,281]{kobayashi-nomizu}. On the other hand, by (a), (\ref{ved}) and
(\ref{set}), $\,g(\y,\y)=d_\y\w\vt=\partial\hn_x\w\vt
=\partial\hn_x\w\phi\nh_x\w=\phi\nh_{x\hn x}\w=\jq$.

For our $\,(\vt,\lambda)=(\phi\nh_x\w,\phi\nh_u\w)$, the mapping
$\,(x,u)\mapsto(\vt,\lambda)$ is locally dif\-feo\-mor\-phic due to
(\ref{syl}), which makes 
$\,(\jq,\js,\jp)=(\phi\nh_{x\hn x}\w,\phi\nh_{x\hn u}\w,\phi\nh_{u\hn u}\w)$,
locally, a triple of real-val\-ued functions of the new variables
$\,\vt,\lambda$. Thus, from the chain rule, in matrix form
\begin{equation}\label{mtf}
\begin{array}{rl}
\mathrm{i)}&\left[\begin{matrix}\partial\nh_x\w\nnh
&\nnh\partial\nh_u\w\cr\end{matrix}\right]
=\left[\begin{matrix}\partial\hskip-1pt_\svt\w\nnh
&\nnh\partial\nnh_\lambda\w\cr
\end{matrix}\right]
\left[\begin{matrix}
\jq&\js\cr
\js&\jp\cr\end{matrix}\right]\nnh\nh,\quad\mathrm{ii)}
\left[\begin{matrix}d\vt\cr
d\lambda\cr\end{matrix}\right]
=\left[\begin{matrix}
\jq&\js\cr
\js&\jp\cr\end{matrix}\right]
{\left[\begin{matrix}dx\cr
du\cr\end{matrix}\right]}_{\phantom1}\hskip-5.2pt,\hskip7pt\mathrm{and\ so}\\
\mathrm{iii)}&(\jq\nh\jp\,-\,\js^2)
\left[\begin{matrix}\partial\hskip-1pt_\svt\w\nnh
&\nnh\partial\nnh_\lambda\w\cr\end{matrix}\right]
=\left[\begin{matrix}\partial\nh_x\w\nnh&\nnh\partial\nh_u\w\cr\end{matrix}
\right]
{\left[\begin{matrix}
\jp\hskip-3pt&-\nnh\js\cr
-\nnh\js\hskip-3pt&\jq\cr\end{matrix}\right]}_{\phantom{1_1}}\hskip-9.2pt,\\
\mathrm{iv)}&(\jq\nh\jp\,-\,\js^2)\left[\begin{matrix}dx\cr
du\cr\end{matrix}\right]
=\left[\begin{matrix}
\jp\hskip-3pt&-\nnh\js\cr
-\nnh\js\hskip-3pt&\jq\cr\end{matrix}\right]\left[\begin{matrix}d\vt\cr
d\lambda\cr\end{matrix}\right]\hskip-2.4pt.
\end{array}
\end{equation}
With subscripts still denoting partial differentiations, the obvious 
integrability conditions
$\,\jq\nh_u\w\nh-\js\nh_x\w=\js\nh_u\w\nh-\jp\hskip-1.3pt_x\w=0\,$ and
(\ref{set}) give, due to (\ref{mtf}.i),
\begin{equation}\label{eqv}
\js\jq\hn_\svt\w+\jp\jq\nh_\lambda\w=\jq\hn\js\nh_\svt\w
+\js\nh\js\nh_\lambda\w,\hskip7pt
\js\nh\js\nh_\svt\w+\jp\nh\js\nnh_\lambda\w=\jq\nh\jp\hskip-1.3pt_\svt\w
+\js\nh\jp\hskip-1.3pt_\lambda\w,\hskip7pt
\jq>0\hh,\hskip7pt\jq\nh\jp>\js^2\nnh.
\end{equation}
Conversely, if functions $\,\jq,\js,\jp\,$ of the variables $\,\vt,\lambda\,$
satisfy (\ref{eqv}), then, locally,
\begin{enumerate}
\item[{\rm(d)}]$(\jq,\js,\jp)
=(\phi\nh_{x\hn x}\w,\phi\nh_{x\hn u}\w,\phi\nh_{u\hn u}\w)\,$ for a function
$\,\phi$, with (\ref{syl}), of the variables $\,x,u\,$ related to
$\,\vt,\lambda\,$ via $\,(\vt,\lambda)=(\phi\nh_x\w,\phi\nh_u\w)$, and
$\,\jq,\js,\jp\,$ determine each of $\,\phi,x,u\,$ uniquely up to additive
constants.
\end{enumerate}
In fact, the equalities in (\ref{eqv}) state precisely that the vector fields
$\,\jq\partial\hskip-1pt_\svt\w+\js\hn\partial\nnh_\lambda\w$ and
$\,\js\hn\partial\hskip-1pt_\svt\w+\jp\nh\partial\nnh_\lambda\w$ commute or,
equivalently, the $\,1$-forms
$\,(\jq\nh\jp-\js^2)^{-\nnh1}\nh(\jp\hs d\vt-\js\hs d\lambda)\,$ and
$\,(\jq\nh\jp-\js^2)^{-\nnh1}\nh(\jq\,d\lambda-\js\hs d\vt)$, dual to them,
are closed, and we may declare these vector fields (or, $\,1$-forms) to be
$\,\partial\nh_x\w,\,\partial\nh_u\w$ or, respectively, $\,dx,\,du$. Now
that, locally, $\,x,u\,$ are defined, up to additive constants, we obtain
$\,\phi\,$ by solving the system $\,(\phi\nh_x\w,\phi\nh_u\w)=(\vt,\lambda)$,
where $\,\vt,\lambda\,$ are treated as functions of $\,x,u\,$ via the
resulting locally dif\-feo\-mor\-phic coordinate change
$\,(\vt,\lambda)\mapsto(x,u)$. Closedness of $\,\vt\,dx+\lambda\,du$
and the equality $\,(\jq,\js,\jp)
=(\phi\nh_{x\hn x}\w,\phi\nh_{x\hn u}\w,\phi\nh_{u\hn u}\w)\,$ are obvious:
our choice of $\,dx\,$ and $\,\,du$ gives 
(\ref{mtf}.iv), and hence (\ref{mtf}.ii), so that
$\,d\vt\wedge dx+d\lambda\wedge du=0$.

The $\,g$-La\-plac\-i\-ans of $\,\vt\,$ and $\,\lambda\,$ can also be
expressed as
\begin{enumerate}
\item[{\rm(e)}]$\Delta\hn\vt\,=\,\jq\hn_\svt\w\,+\,\js\nh_\lambda\w\,\,$ and 
$\,\,\Delta\lambda\,=\,\js\nh_\svt\w\,+\,\jp\hskip-1.3pt_\lambda\w$, while
\item[{\rm(f)}]$\jg\nnh_x\w
=(\jq\hn_\svt\w+\js\nh_\lambda\w)\jg\,$ and 
$\,\jg\nnh_u\w
=(\js\nh_\svt\w+\jp\hskip-1.3pt_\lambda\w)\jg$, for
$\,\jg\nh=\jq\nh\jp-\js^2\nh$.
\end{enumerate}
To see this, first note that, by (\ref{mtf}.i), 
$\,(\jq\nh\jp-\js^2)\nh_x\w=\jq(\jq\nh\jp-\js^2)\nh_\svt\w+\js(\jq\nh\jp
-\js^2)\nh_\lambda\w=
\jq(\jq\nh\jp\hskip-1.3pt_\svt\w+\js\nh\jp\hskip-1.3pt_\lambda\w
-\js\nh\js\nh_\svt\w)
-\js(\jq\hn\js\nh_\svt\w+\js\nh\js\nh_\lambda\w
-\jp\jq\nh_\lambda\w)-\js^2\nh\js\nh_\lambda\w
+\jq\nh\jp\jq\hn_\svt\w$. Using (\ref{eqv}) to replace the two three-term sums
in parentheses by $\,\jp\nh\js\nh_\lambda\w$ and $\,\js\jq\hn_\svt\w$,
we thus obtain the first part of (f). For the second one we similarly rewrite 
$\,(\jq\nh\jp-\js^2)\nh_u\w
=\js(\jq\nh\jp-\js^2)\nh_\svt\w+\jp(\jq\nh\jp-\js^2)\nh_\lambda\w$ as 
$\,\js(\jq\nh\jp\hskip-1.3pt_\svt\w-\jp\js\nh_\lambda\w-\js\nh\js\nh_\svt\w)
-\js^2\nh\js\nh_\svt\w+\jp(\jp\jq\nh_\lambda\w+\js\jq\hn_\svt\w
-\js\nh\js\nh_\lambda\w)+\jq\nh\jp\jp\hskip-1.3pt_\lambda\w$, and use 
analogous three-term replacements based on (\ref{eqv}). Now (e) follows 
from (c), (\ref{set}) and (f).
\begin{theorem}\label{equiv}
In\/ $\,\bbC^2$ with the complex coordinates\/
$\,(x+ix'\nh,u+iu')$,  
given an open subset\/ $\,M\,$ and a function\/ $\,\phi:M\to\bbR\,$ of the real
variables\/ $\,x,u$, having the property\/ {\rm(\ref{syl})}, let\/ $\,g\,$ be
the K\"ah\-ler metric on\/ $\,M\,$ with the K\"ah\-ler potential\/ $\,2\phi$.
The following two conditions are equivalent.
\begin{enumerate}
  \def\theenumi{{\rm\roman{enumi}}}
\item[(i)]The special Ric\-ci-\nh Hess\-i\-an equation\/ {\rm(\ref{fma})} -- 
{\rm(\ref{anz})} holds on\/ $\,M\,$ for\/ $\,\vt=\phi\nh_x\w$,  
and\/ $\,d\jq\wedge d\vt\ne0\,$ everywhere in\/ $\,M\nh$, with\/
$\,\jq=g(\nabla\nh\vt,\nnh\nabla\nh\vt)$. Thus, by Theorem\/~{\rm\ref{types}},
one has\/ {\rm(\ref{app})} and\/ {\rm(\ref{cns}.a)}, where\/
$\,Y\nh=\Delta\hn\vt\,$ and\/ $\,F\,$ is the function of\/ $\,\vt\,$
characterized by\/ {\rm(\ref{fcf})}, for the constants\/ $\,\tz,\kappa\,$
in\/ {\rm(\ref{cts})}.
\item[(ii)]The triple\/
$\,(\jq,\js,\jp)=(\phi\nh_{x\hn x}\w,\phi\nh_{x\hn u}\w,\phi\nh_{u\hn u}\w)\,$ 
of functions of the new variables\/ $\,(\vt,\lambda)
=(\phi\nh_x\w,\phi\nh_u\w)\,$ satisfies\/ {\rm(\ref{eqv})} with\/
$\,\jq\nh_\lambda\w\ne0\,$ everywhere, as well as the equations\/ 
$\,\js\nh_\svt\w+\jp\hskip-1.3pt_\lambda\w\nh=\js\nh\az+\jr$, 
$\,\,\jq\hn_\svt\w+\js\nnh_\lambda\w\nh=\jq\az+F\nh$, 
$\,\,\jr\nh_\svt\w\nh=-\js\nh\az'\nh$, $\,\jr\nnh_\lambda\w\nh
=\jq\az'\nh+F'$ for some function\/ $\,\jr\,$ and\/
$\,(\hskip2.5pt)'\nh=d/d\vt$.
\end{enumerate}
\end{theorem}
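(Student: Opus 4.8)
The plan is to read condition (i) as the vanishing of the Her\-mit\-i\-an symmetric $\,2$-ten\-sor $\,b=\az\nabla d\vt+\rz-\sz g\,$ and to use that, in the coordinates of Section~\ref{lk}, the four fields $\,\y=\partial_x,\ J\y=\partial_{x'},\ \z=\partial_u,\ J\z=\partial_{u'}\,$ constitute a global frame. Because $\,b\,$ is Her\-mit\-i\-an, $\,\imath_\y b=0\,$ entails $\,\imath_{J\y}b=0$, and similarly for $\,\z$, so $\,b=0\,$ if and only if the two $\,1$-forms $\,\imath_\y b\,$ and $\,\imath_\z b\,$ vanish. First I would pair (\ref{fma}) with $\,\y$: by (\ref{nvv}) and (\ref{trv}) -- both valid here, since $\,\y=\nabla\vt\,$ is a real-hol\-o\-mor\-phic gradient -- one has $\,2[\nabla d\vt](\y,\,\cdot\,)=d\jq\,$ and $\,2\rz(\y,\,\cdot\,)=-dY$, so this pairing becomes $\,\az\,d\jq-dY=2\sz\,d\vt$. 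As $\,\az\,$ depends only on $\,\vt\,$ while $\,\jq,Y\,$ are functions of $\,(\vt,\lambda)\,$ by (e), the $\,d\lambda$-com\-po\-nent reads $\,Y_\lambda=\az\jq_\lambda$, which shows $\,Y-\az\jq\,$ to be independent of $\,\lambda$, hence a function $\,F\,$ of $\,\vt\,$ -- that is, (\ref{cns}.a) and, through (e), the second equation of (ii); the $\,d\vt$-com\-po\-nent is precisely (\ref{cns}.b).

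The central and most delicate step is the mixed-Hessian analogue of (\ref{nvv}), namely $\,2[\nabla d\vt](\z,\,\cdot\,)=d\js\,$ with $\,\js=g(\y,\z)=\phi_{xu}$; I expect this to be the main obstacle, as it is the only place where the integrability relations (\ref{eqv}) enter essentially. I would prove it by evaluating the $\,1$-form $\,[\nabla d\vt](\z,\,\cdot\,)\,$ on the frame: its $\,J\y$- and $\,J\z$-com\-po\-nents vanish, since $\,[\nabla d\vt](\y,J\z)=\frac12 d\jq(J\z)=\frac12\partial_{u'}\jq=0\,$ and Her\-mit\-i\-an symmetry, matching $\,d\js(J\y)=d\js(J\z)=0$; on $\,\y\,$ and $\,\z\,$ the two sides agree after combining the polarization $\,d\js=[\nabla d\vt](\z,\,\cdot\,)+[\nabla d\lambda](\y,\,\cdot\,)\,$ of (\ref{nvv}), the identity $\,2[\nabla d\lambda](\z,\,\cdot\,)=d\jp$, the chain rule (\ref{mtf}.i), and (\ref{eqv}). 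Granting this, pairing (\ref{fma}) with $\,\z\,$ and invoking $\,2\rz(\z,\,\cdot\,)=-d(\Delta\lambda)\,$ -- Calabi's (\ref{rhg}) for the hol\-o\-mor\-phic gradient $\,\z=\nabla\lambda\,$ -- yields $\,\az\,d\js-d(\Delta\lambda)=2\sz\,d\lambda$. Putting $\,\jr:=\Delta\lambda-\az\js\,$ gives, via (e), the first equation of (ii), while its $\,d\vt$- and $\,d\lambda$-com\-po\-nents produce $\,\jr_\svt=-\js\az'\,$ and $\,\jr_\lambda=-2\sz=\jq\az'+F'$, the third and fourth equations (the last rewriting using (\ref{cns}.b)). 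Thus the four equations of (ii) are exactly the $\,d\vt$- and $\,d\lambda$-com\-po\-nents of the $\,\y$- and $\,\z$-pairings.

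For the converse I would reverse these steps. Given (ii), set $\,\sz=-\frac12(\jq\az'+F')$; the second equation with (e) gives $\,dY=\az\,d\jq+(\jq\az'+F')\,d\vt$, whence $\,\imath_\y b=0$, and the first, third and fourth give $\,d(\Delta\lambda)=\az\,d\js+(\jq\az'+F')\,d\lambda$, whence $\,\imath_\z b=0$; the frame reduction then forces $\,b=0$, i.e.\ (\ref{fma}). The remaining parts of (\ref{anz}) are immediate: $\,d\jq\wedge d\vt\ne0\,$ is equivalent to $\,\jq_\lambda\ne0\,$ and makes $\,\nabla d\vt\ne0$, while $\,\az\,$ is a prescribed $\,C^\infty\,$ function of $\,\vt$. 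A point that must be handled, rather than assumed, is (\ref{app}): the compatibility $\,\jr_{\svt\lambda}=\jr_{\lambda\svt}\,$ of the third and fourth equations, after inserting the second, reads $\,\jq\hs(\az''+\az\az')+(F''+\az'F)=0$; since $\,\jq_\lambda\ne0\,$ lets $\,\jq\,$ vary independently of $\,\vt$, the coefficient of $\,\jq\,$ must vanish, forcing $\,\az''+\az\az'=0\,$ -- that is, (\ref{app}) -- together with $\,F''+\az'F=0$, the relation recorded in (\ref{cns}.c). This reconciles (ii) with the conclusions of Theorem~\ref{types}.
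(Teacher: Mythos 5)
Your proposal is correct and follows essentially the same route as the paper's proof: reduce (\ref{fma}) to the vanishing of its contractions with $\,\y=\nabla\nh\vt\,$ and $\,\z=\nabla\nh\lambda\,$ via Her\-mit\-i\-an symmetry, use the identity $\,2[\nabla\nh d\vt](\z,\,\cdot\,)=d\js\,$ together with Calabi's identity $\,2\hh\rz\hh(\z,\,\cdot\,)=-\hh d\hs\Delta\lambda\,$ to turn the $\,\z$-contraction into the equations for $\,\jr$, and reverse the steps for the converse (your extra compatibility check $\,\jr_{\svt\lambda}=\jr_{\lambda\svt}$ recovering (\ref{app}) is a harmless bonus). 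The one small correction: the identity $\,2[\nabla\nh d\vt](\z,\,\cdot\,)=d\js\,$ requires nothing beyond the fact that $\,\y\,$ and $\,\z\,$ are commuting gradients (so $\,\nabla\nh\!_\y\w\z=\nabla\nh\!_\z\w\y\,$ and polarization of (\ref{nvv}) suffices, as in the paper's one-line coordinate computation), not the relations (\ref{eqv}), which merely record that $\,(\jq,\js,\jp)\,$ arise from the potential $\,\phi\,$ and are already part of the theorem's hypotheses.
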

\begin{proof}By (\ref{cns}.b), equation (\ref{fma}) is, in case (i),
equivalent to
\begin{equation}\label{eqt}
2\az\nabla\nh d\vt\,+\,2\rz\,=\,-(\jq\az'\nh+F')\hh g\hh,
\end{equation}
where all the terms are Her\-mit\-i\-an symmetric $\,2$-ten\-sor fields, and
hence correspond, via $\,g$, to {\it com\-plex-lin\-e\-ar\/} bundle morphisms
$\,\tm\to\tm\nh$. Thus, (\ref{eqt}) amounts to
\begin{enumerate}
\item[{\rm(f)}]equalities of the images of both sides in (\ref{eqt})
under $\,\imath_\y\w$ and $\,\imath_\z\w$.
\end{enumerate}
The equality of the $\,\imath_\y\w$-im\-ages is, by (\ref{rwr}.b) and
(\ref{trv}), the
result of applying $\,d$, via (\ref{ape}), to the relation (\ref{cns}.a) in
(i): $\,Y\nnh-Q\az=F\nh$. This last
relation and (e), with $\,Y\nh=\Delta\hn\vt\,$ due to (\ref{ven}), 
show that (i) implies the equality $\,\jq\hn_\svt\w\hn+\js\nnh_\lambda\w\nh
=\jq\az+F\,$ in (ii). Defining $\,\jr\,$ to be
$\,\js\nh_\svt\w+\jp\hskip-1.3pt_\lambda\w-\js\nh\az\,$ 
we get $\,\js\nh_\svt\w+\jp\hskip-1.3pt_\lambda\w=\js\nh\az+\jr$. On the
other hand, the equality
of the $\,\imath_\z\w$-im\-ages in (\ref{eqt}) reads
\begin{equation}\label{img}
\az\,d\js\,-\,d\hs\Delta\lambda\,=\,-(\jq\az'\nh+F')\,d\lambda\hh.
\end{equation}
In fact, the first term equals $\,\az\,d\js\,$ since, for the two commuting
gradients $\,\y=\nabla\nh\vt$ and $\,\z=\nabla\nh\lambda$, one has
$\,2\nabla\!_\z\w d\vt=\hs d\hh[g(\y,\z)]\,$ or, in local coordinates, 
$\,2\z\hh^k\y_{,\hh jk}=\z\hh^k\y_{,\hh jk}+\y\hh^k\z_{,\hh jk}
=(\y\hh^k\z_k\w)\hn_{,\hh j}\w$, and $\,\js=\phi\nh_{x\hn u}\w
=g(\y,\z)\,$ by (\ref{mtr}) and (\ref{ved}). The second term is
$\,-d\hs\Delta\lambda\,$ due to (a) and (\ref{rhg}). By (e),
$\,\jr=\js\nh_\svt\w+\jp\hskip-1.3pt_\lambda\w-\js\nh\az=\Delta\lambda
-\js\nh\az$, and so (\ref{img}) becomes
$\,\az\,d\js-d\hh(\jr+\js\nh\az)=-(\jq\az'\nh+F')\,d\lambda$, that is,
according to (\ref{ape}),
\[
d\jr\,=\,(\jq\az'\nh+F')\,d\lambda\,-\,\js\hs d\az\,
=\,(\jq\az'\nh+F')\,d\lambda\,-\,\js\nh\az'\nh d\vt
\]
or, in other words,
$\,\jr\nh_\svt\w=-\js\nh\az'$ and $\,\jr\nnh_\lambda\w=\jq\az'\nh+F'\nh$.
Consequently, (i) implies (ii), since (\ref{eqv}) arises as
the integrability conditions
$\,\jq\nh_u\w\nh-\js\nh_x\w=\js\nh_u\w\nh-\jp\hskip-1.3pt_x\w=0\,$ 
combined with (\ref{syl}), and the equality
$\,d\jq=\jq\hn_\svt\w\hs d\vt+\jq\nh_\lambda\w\hs d\lambda\,$ yields 
$\,d\jq\wedge d\vt=-\jq\nh_\lambda\w\hs d\vt\wedge d\lambda$.

Conversely, assuming (ii), we get (i) from (f). Namely, as we saw above,
the equality of the $\,\imath_\y\w$-im\-ages in (\ref{eqt}) arises by applying
$\,d\,$ to $\,Y\nnh-Q\az=F\nh$, that is -- cf.\ (e) -- to 
$\,\jq\hn_\svt\w\hn+\js\nnh_\lambda\w\nh=\jq\az+F\nh$. Also, (\ref{img})
follows from (ii) and (e):
$\,\az\,d\js-d\hs\Delta\lambda
=\az\,d\js-d\hh(\js\nh_\svt\w+\jp\hskip-1.3pt_\lambda\w)
=\az\,d\js-d\hh(\js\nh\az+\jr)
=-d\jr-\js\hs d\az
=-d\jr-\js\nh\az'\nh d\vt
=\jr\nh_\svt\w\hs d\vt-d\jr
=-\jr\nnh_\lambda\w\hs d\lambda
=-(\jq\az'\nh+F')\,d\lambda$. This completes the proof.
\end{proof}
Note that (e), (f), Theorem~\ref{equiv}(ii) and (\ref{ved}) give
\[
\Delta\hn\vt=\jq\az+F\nh,\quad\Delta\lambda=\js\nh\az+\jr,\quad
d_\y\w\jg\nh=\jg\Delta\hn\vt,\quad
d_\z\w\jg\nh=\jg\Delta\lambda.
\]

\section{Some linear algebra}\label{sl}
\setcounter{equation}{0}
We now proceed to discuss the first-or\-der system equivalent, as we saw in
the last section (Theorem~\ref{equiv}), to the K\"ah\-ler-po\-ten\-tial
problem, the solution 
of which amounts to proving Theorem~\ref{reali}. The main result established
here, Theorem~\ref{affsb}, will lead -- in Section~\ref{ue} -- to a
unique\hs-extension property of integral lines, which results in applicability 
of the Car\-tan-K\"ah\-ler theorem to our situation.

Theorem~\ref{equiv} reduces constructing local examples of special
Ric\-ci-\nh Hess\-i\-an equations (\ref{fma}) -- (\ref{anz}) with
$\,d\jq\wedge d\vt\ne0$, on K\"ah\-ler surfaces, which is a fourth-or\-der
problem in the K\"ah\-ler potential $\,2\phi$, to the following system of 
qua\-si-lin\-e\-ar first-or\-der partial differential equations, with
subscripts representing partial derivatives:
\begin{equation}\label{foe}
\begin{array}{rl}
\mathrm{a)}&\jq\hn_\svt\w+\js\nnh_\lambda\w-\jq\az-F=0\hh,\\
\mathrm{b)}&\js\nh_\svt\w+\jp\hskip-1.3pt_\lambda\w-\js\nh\az-\jr=0\hh,\\
\mathrm{c)}&\jq\nh\jp\hskip-1.3pt_\svt\w+\js\nh\jp\hskip-1.3pt_\lambda\w
-\js\nh\js\nh_\svt\w-\jp\nh\js\nh_\lambda\w=0\hh,\\
\mathrm{d)}&\js\jq\hn_\svt\w+\jp\jq\nh_\lambda\w-\jq\hn\js\nh_\svt\w
-\js\nh\js\nnh_\lambda\w=0\hh,\\
\mathrm{e)}&\jr\nh_\svt\w+\js\nh\az'\nh=0\hh,\\
\mathrm{f)}&\jr\nnh_\lambda\w-\jq\az'\nh-F'\nh=0\hh,
\end{array}
\end{equation}
on which one imposes the additional conditions 
\begin{equation}\label{det}
\jq\nh\jp>\js^2,\qquad\jq\,>\,0\hh,\qquad\jq\nh_\lambda\w\hs\ne\,0\,\mathrm{\ 
\ \ everywhere.}
\end{equation}
Generally, if $\,\jq,\js,\jp\,$ are real-val\-ued functions 
of the real variables $\,\vt,\lambda\,$ and  subscripts denote partial
differentiations, writing 
$\,d[(\jq\nh\jp\nnh-\nnh\js^2)^{-\nnh1}\nh(\jp\hs d\vt\nh
-\nh\js\hs d\lambda)]=\varPhi\,d\vt\wedge d\lambda\,$ 
and $\,d[(\jq\nh\jp\nnh-\nnh\js^2)^{-\nnh1}\nh(\js\hs d\vt\nh
  -\nh\jq\,d\lambda)]=\varPsi\,d\vt\wedge d\lambda\,$ at points where
$\,\jq\nh\jp\ne\js^2\nh$, one easily verifies that
\begin{equation}\label{php}
\left[\begin{matrix}
\varPhi\cr
\varPsi\end{matrix}\right]=
\left[\begin{matrix}
\js&\jp\cr
\jq&\js\end{matrix}\right]
\left[\begin{matrix}
\jq\nh\jp\hskip-1.3pt_\svt\w+\js\nh\jp\hskip-1.3pt_\lambda\w
-\js\nh\js\nh_\svt\w-\jp\nh\js\nh_\lambda\w\cr
\js\jq\hn_\svt\w+\jp\jq\nh_\lambda\w-\jq\hn\js\nh_\svt\w
-\js\nh\js\nnh_\lambda\w\end{matrix}\right]\nnh.
\end{equation}
For the remainder of this section we treat the letter symbols in (\ref{foe}) --
(\ref{det}) as real variables, so as to derive some consequences of conditions 
(\ref{foe}) -- (\ref{det}) just by using linear algebra.
\begin{lemma}\label{cnseq}If\/
$\,(\jq,\js,\jp\nh,\jr,\jq\hn_\svt\w,\js\nh_\svt\w,\jp\hskip-1.3pt_\svt\w,
\jr\nh_\svt\w,\jq\nh_\lambda\w,\js\nh_\lambda\w,
\jp\hskip-1.3pt_\lambda\w,\jr\nh_\lambda\w)\in\bbR\nh^{12}$ satisfies 
the conditions\/ {\rm(\ref{foe}.a)} -- {\rm(\ref{foe}.d)}, with some\/
$\,(\az,F)\in\rto\nh$, then
\begin{equation}\label{qbt}
\begin{array}{rl}
\mathrm{i)}&\jq\nh\jp\hskip-1.3pt_\svt\w+\jp\jq\hn_\svt\w
-2\js\nh\js\nh_\svt\w-(\jq\nh\jp-\js^2)\az-\jp F+\js\jr=0\hh,\\
\mathrm{ii)}&\jq\nh\jp\hskip-1.3pt_\lambda\w+\jp\jq\nh_\lambda\w
-2\js\nh\js\nnh_\lambda\w-\jq\jr+\js F=0\hh.
\end{array}
\end{equation}
\end{lemma}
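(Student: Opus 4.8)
The plan is to read the first three terms of each of (\ref{qbt}.i) and (\ref{qbt}.ii) as the formal $\vt$- and $\lambda$-derivatives of the determinant $\jg=\jq\jp-\js^2$, namely $\jp\jq_\svt+\jq\jp_\svt-2\js\js_\svt$ in (i) and $\jp\jq_\lambda+\jq\jp_\lambda-2\js\js_\lambda$ in (ii). Here these are merely real numbers, but this determinant structure is exactly what organizes the cancellations. With this reading, (\ref{qbt}) asserts the two identities
\[
\jp\jq_\svt+\jq\jp_\svt-2\js\js_\svt=\jg\az+\jp F-\js\jr,\qquad
\jp\jq_\lambda+\jq\jp_\lambda-2\js\js_\lambda=\jq\jr-\js F,
\]
so the whole lemma is a purely linear-algebraic consequence of (\ref{foe}.a)--(\ref{foe}.d) in the twelve real unknowns, with $\,(\az,F)\in\rto$ fixed.

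For the first identity I would eliminate $\jq\jp_\svt$ using (\ref{foe}.c), which gives $\jq\jp_\svt=\js\js_\svt+\jp\js_\lambda-\js\jp_\lambda$, and $\jp\jq_\svt$ using (\ref{foe}.a) in the form $\jq_\svt=\jq\az+F-\js_\lambda$, so that $\jp\jq_\svt=\jp\jq\az+\jp F-\jp\js_\lambda$. Adding these and subtracting $2\js\js_\svt$, the two $\jp\js_\lambda$ terms cancel and the $\js\js_\svt$ terms combine, leaving $\jp\jq\az+\jp F-\js(\js_\svt+\jp_\lambda)$. Finally (\ref{foe}.b) replaces $\js_\svt+\jp_\lambda$ by $\js\az+\jr$, and collecting the $\az$-terms into $(\jq\jp-\js^2)\az=\jg\az$ yields (\ref{qbt}.i).

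The second identity is obtained by the mirror-image computation: eliminate $\jp\jq_\lambda$ by (\ref{foe}.d), writing $\jp\jq_\lambda=\jq\js_\svt+\js\js_\lambda-\js\jq_\svt$, and $\jq\jp_\lambda$ by (\ref{foe}.b) in the form $\jp_\lambda=\js\az+\jr-\js_\svt$, so that $\jq\jp_\lambda=\jq\js\az+\jq\jr-\jq\js_\svt$. Here the $\jq\js_\svt$ terms cancel and the remainder reduces to $-\js(\jq_\svt+\js_\lambda)+\jq\js\az+\jq\jr$, whereupon (\ref{foe}.a) turns $\jq_\svt+\js_\lambda$ into $\jq\az+F$ and the $\az$-terms cancel, giving (\ref{qbt}.ii). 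I expect no genuine obstacle beyond keeping track of the cancellations; the only thing to watch is that each of (\ref{foe}.a)--(\ref{foe}.d) is invoked exactly once and in the right slot, the determinant-derivative viewpoint being what guarantees that the terms quadratic in $\js$ combine correctly.
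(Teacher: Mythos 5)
Your proof is correct and is essentially the computation the paper performs: the paper verifies (\ref{qbt}.i) by writing its left-hand side as the combination $(\ref{foe}.\mathrm{c})+\jp\cdot(\ref{foe}.\mathrm{a})-\js\cdot(\ref{foe}.\mathrm{b})$ and (\ref{qbt}.ii) as $(\ref{foe}.\mathrm{d})+\jq\cdot(\ref{foe}.\mathrm{b})-\js\cdot(\ref{foe}.\mathrm{a})$, which is just the linear-combination form of your term-by-term elimination. The determinant-derivative reading of the leading terms is a nice organizing remark (and is indeed how the paper later packages these identities in (\ref{dqb}.ii)), but the underlying algebra is the same.
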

\begin{proof}The left-hand side of (\ref{qbt}.i) is, obviously, 
$\,(\jq\nh\jp\hskip-1.3pt_\svt\w+\js\nh\jp\hskip-1.3pt_\lambda\w
-\js\nh\js\nh_\svt\w-\jp\nh\js\nh_\lambda\w)
+(\jq\hn_\svt\w+\js\nnh_\lambda\w-\jq\az-F)\jp
-(\js\nh_\svt\w+\jp\hskip-1.3pt_\lambda\w-\js\nh\az-\jr)\js$, and each sum in
parentheses vanishes due to (\ref{foe}). Similarly,
(\ref{qbt}.ii) has the left-hand side
$\,(\js\jq\hn_\svt\w+\jp\jq\nh_\lambda\w-\jq\hn\js\nh_\svt\w
-\js\nh\js\nnh_\lambda\w)
+(\js\nh_\svt\w+\jp\hskip-1.3pt_\lambda\w-\js\nh\az-\jr)\jq
-(\jq\hn_\svt\w+\js\nnh_\lambda\w-\jq\az-F)\js=0+0+0$.
\end{proof}
With subscripts denoting partial differentiations, (\ref{foe}.e\hs-\hh f)
and (\ref{qbt}) read
\begin{equation}\label{dqb}
\begin{array}{rl}
\mathrm{i)}&d\jr\,=\,-\js\nh\az'd\vt\,
+\,(\jq\az'\nh+\hs F')\,d\lambda\hh,\\
\mathrm{ii)}&d\jg=\hs(\jg\nh\az
+\jp F-\js\jr)\,d\vt\,
+\,\nh(\jq\jr-\js F)\,d\lambda\,\mathrm{\ for\ }\,\jg=\jq\nh\jp-\js^2\nh.
\end{array}
\end{equation}
Since $\,d\jg\nh
=\jg\hh[(\jq\hn_\svt\w+\js\nh_\lambda\w)\,dx
+(\js\nh_\svt\w+\jp\hskip-1.3pt_\lambda\w)\,du]\,$ due to (f), 
one can also obtain (\ref{dqb}.ii) from 
(\ref{mtf}.iv), with $\,\jg=\jq\nh\jp-\js^2\nh$, and (\ref{foe}.a\hs-b).
\begin{lemma}\label{onlyz}Let\/
$\,(\dot\vt,\dot\lambda,\jq,\js,\jp\nh,\jr)\in\bbR\nh^6$ have\/ 
$\,(\dot\vt,\dot\lambda)\ne(0,0)\,$ and\/ $\,\jq\nh\jp>\js^2\nh$.
\begin{enumerate}
\item[{\rm(a)}]$\jq\nh\jp>0\,$ and\/ $\,\varPsi\ne0$, where\/ 
$\,\varPsi=\jp\dot\vt^2\nh-2\js\dot\vt\dot\lambda+\jq\dot\lambda^2\nh$.
\item[{\rm(b)}]$(0,0,0,0,0,0)\,$ is the only vector\/
$\,(\jq\hn_\svt\w,\js\nh_\svt\w,\jp\hskip-1.3pt_\svt\w,
\jq\nh_\lambda\w,\js\nh_\lambda\w,\jp\hskip-1.3pt_\lambda\w)
\in\bbR\nh^6$ with
\[
\left[\begin{matrix}
1&0&0&0&1&0\cr
0&1&0&0&0&1\cr
0&-\nh\js&\jq&0&-\nh\jp&\js\cr
\js&-\jq&0&\jp&-\nh\js&0\cr
\dot\vt&0&0&\dot\lambda&0&0\cr
0&\dot\vt&0&0&\dot\lambda&0\cr
0&0&\dot\vt&0&0&\dot\lambda\end{matrix}\right]
\left[\begin{matrix}
\jq\hn_\svt\w\cr
\js\nh_\svt\w\cr
\jp\hskip-1.3pt_\svt\w\cr
\jq\nh_\lambda\w\cr
\js\nh_\lambda\w\cr
\jp\hskip-1.3pt_\lambda\w\end{matrix}\right]
=\left[\begin{matrix}
0\cr0\cr0\cr0\cr0\cr0\end{matrix}\right]\nnh.
\]  
\item[{\rm(c)}]The first four rows of the above\/ 
$\,7\times6\,$ matrix are linearly independent.
\end{enumerate}
\end{lemma}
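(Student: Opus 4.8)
The plan is to dispose of (a) first, since it supplies the two non-vanishing facts---$\jq,\jp\ne0$ and $\varPsi\ne0$---on which the purely linear-algebra arguments for (b) and (c) both rest. For (a): because $\js^2\ge0$, the hypothesis $\jq\jp>\js^2$ gives at once $\jq\jp>0$, so $\jq$ and $\jp$ are nonzero and of the same sign. The expression $\varPsi=\jp\dot\vt^2-2\js\dot\vt\dot\lambda+\jq\dot\lambda^2$ is a binary quadratic form in $(\dot\vt,\dot\lambda)$ whose determinant is $\jq\jp-\js^2>0$, hence it is definite (positive if $\jp>0$, negative if $\jp<0$); as $(\dot\vt,\dot\lambda)\ne(0,0)$, this forces $\varPsi\ne0$.

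For (b) I would rewrite the matrix identity as seven scalar equations in the six unknown derivatives and split on whether $\dot\lambda=0$. If $\dot\lambda\ne0$, put $t=\dot\vt/\dot\lambda$: the last three rows give $\jq_\lambda=-t\jq_\svt$ and the analogues for $\js,\jp$, rows one and two give $\jq_\svt=t\js_\svt$ and $\js_\svt=t\jp_\svt$, and row three then collapses to $(\jp t^2-2\js t+\jq)\,\jp_\svt=0$. Since $\varPsi=\dot\lambda^2(\jp t^2-2\js t+\jq)\ne0$ by (a), the scalar factor is nonzero, so $\jp_\svt=0$ and hence all six derivatives vanish, row four being then automatic. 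If instead $\dot\lambda=0$ (so $\dot\vt\ne0$), the last three rows kill the three $\vt$-derivatives, rows one and two kill $\js_\lambda$ and $\jp_\lambda$, and row four reduces to $\jp\,\jq_\lambda=0$, whence $\jq_\lambda=0$ as well because $\jp\ne0$ by (a).

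For (c) I would set $\alpha R_1+\beta R_2+\gamma R_3+\delta R_4=0$ for the four rows $R_1,\dots,R_4$ and compare coefficients columnwise. The third and fourth columns read $\gamma\jq=0$ and $\delta\jp=0$, so $\gamma=\delta=0$ because $\jq,\jp\ne0$ by (a); the first and sixth columns then give $\alpha=0$ and $\beta=0$.

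I do not expect a serious obstacle, as the whole lemma reduces to elementary linear algebra once (a) is available. The one step worth flagging is the elimination in (b): one must recognize that the scalar $\jp t^2-2\js t+\jq$ produced there is exactly $\varPsi/\dot\lambda^2$, so that the definiteness from (a) is precisely the ``non-characteristic direction'' input that trivializes the kernel, while the degenerate direction $\dot\lambda=0$ relies instead on $\jp\ne0$. In particular, (a) is the only place where the strict inequality $\jq\jp>\js^2$ is genuinely used.
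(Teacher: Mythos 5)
Your proposal is correct and follows essentially the same route as the paper: both arguments rest on the definiteness of the quadratic form $\varPsi$ for part (a), then reduce (b) and (c) to elementary Gaussian elimination with the same case split on $\dot\lambda\ne0$ versus $\dot\lambda=0$, the pivots being exactly $\jq,\jp,\dot\lambda$ (or $\dot\vt$) and $\varPsi$. The only difference is organizational: the paper packages (b) and (c) into one explicit row reduction to an upper-triangular $6\times6$ matrix, whereas you eliminate by substitution for (b) and check (c) separately, which is equally valid.
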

\begin{proof}Assertion (a) follows as $\,\varPsi\,$ is positive or negative
definite in $\,(\dot\vt,\dot\lambda)$. Next, one easily verifies that, for
$\,\varPsi\,$ as in (a),
\[
(\jp\dot\vt-\js\dot\lambda)\dot\lambda\text{\smbf r}\hn_1\w
+\varPsi\text{\smbf r}\hn_2\w
+\dot\lambda^2\nh\text{\smbf r}\hn_4\w
-\jp\dot\lambda\text{\smbf r}\hn_5\w
+(2\js\dot\lambda-\jp\dot\vt)\text{\smbf r}\hn_6\w
=(0,0,0,0,0,\varPsi)\hh.
\]
Depending on whether $\,\dot\lambda\ne0\,$
(or, $\,\dot\lambda=0\,$ and hence $\,\dot\vt\ne0$), the $\,6\times6\,$ matrix
with the rows
$\,\text{\smbf r}\hn_1\w,\,\text{\smbf r}\hn_2\w,\,\text{\smbf r}\hn_3\w
+\js\text{\smbf r}\hn_2\w,\,\text{\smbf r}\hn_4\w
-\js\text{\smbf r}\hn_1\w+\jq\text{\smbf r}\hn_2\w$ 
followed by $\,\text{\smbf r}\hn_6\w-\dot\vt\text{\smbf r}\hn_2\w$
(or, respectively, by
$\,\text{\smbf r}\hn_5\w-\dot\vt\text{\smbf r}\hn_1\w$), and then by
$\,(0,0,0,0,0,\varPsi)\,$ displayed above, is upper triangular, with the diagonal entries
$\,1,1,\jq,\jp,\dot\lambda,\varPsi$ or, respectively,
$\,1,1,\jq,\jp,\nnh-\nh\dot\vt,\varPsi$, all of them nonzero by 
(a). These six new rows thus form a basis of $\,\bbR\nh^6$ (so that the matrix
has rank $\,6$), while the first four 
of them are linear combinations of the first four original rows, which proves
both (b) and (c).
\end{proof}
\begin{theorem}\label{affsb}Given vectors\/
$\,(\dot\vt,\dot\lambda,\jq,\js,\jp\nh,\jr)\in\bbR\nh^6$ and\/ 
$\,(\az,F\nh,\az'\nh,F')\in\bbR\nh^4$ with\/
$\,(\dot\vt,\dot\lambda)\ne(0,0)\,$ and\/ $\,\jq\nh\jp>\js^2\nh$, the set of
all
\begin{equation}\label{qts}
(\jq\hn_\svt\w,\js\nh_\svt\w,\jp\hskip-1.3pt_\svt\w,\jr\nh_\svt\w,
\jq\nh_\lambda\w,\js\nh_\lambda\w,\jp\hskip-1.3pt_\lambda\w,\jr\nh_\lambda\w)
\,\in\,\bbR\nh^8
\end{equation}
satisfying\/ {\rm(\ref{foe})} forms a two\hs-di\-men\-sion\-al af\-fine 
sub\-space\/ 
$\,\mathcal{A}\,$ of\/ $\,\bbR\nh^8\nh$. Furthermore, the restriction to\/
$\,\mathcal{A}\,$ of the linear operator\/
$\,\varPhi:\bbR\nh^8\nh\to\bbR\nh^4$ sending\/ {\rm(\ref{qts})} to
\begin{equation}\label{dqd}
(\dot\jq,\dot\js,\dot\jp\nh,\dot\jr)
=(\jq\hn_\svt\w\dot\vt+\jq\nh_\lambda\w\dot\lambda,\,
\js\nh_\svt\w\dot\vt+\js\nh_\lambda\w\dot\lambda,\,
\jp\hskip-1.3pt_\svt\w\dot\vt+\jp\hskip-1.3pt_\lambda\w\dot\lambda,\,
\jr\nh_\svt\w\dot\vt+\jr\nh_\lambda\w\dot\lambda)
\end{equation}
is an af\-fine iso\-mor\-phism\/ 
$\,\varPhi:\mathcal{A}\to\mathcal{L}\,$ onto the two\hs-di\-men\-sion\-al
af\-fine sub\-space\/ $\,\mathcal{L}$  of\/ $\,\bbR\nh^4$ consisting of all\/ 
$\,(\dot\jq,\dot\js,\dot\jp\nh,\dot\jr)\,$ such that
\begin{equation}\label{dge}
\begin{array}{l}
\dot\jr\,=\,-\js\nh\az'\nh\dot\vt\,+\,(\jq\az'\nh+F')\dot\lambda\hh,\\
\jq\nh\dot\jp+\jp\dot\jq-2\js\nh\dot\js
=[(\jq\nh\jp-\js^2)\az+\jp F-\js\jr]\dot\vt+(\jq\jr-\js F)\dot\lambda\hh.
\end{array}
\end{equation}
\end{theorem}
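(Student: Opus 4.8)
The plan is to read the whole statement off the two linear-algebra lemmas already in hand, treating (\ref{foe}) as an inhomogeneous linear system in the eight partials (\ref{qts}), with the data $(\dot\vt,\dot\lambda,\jq,\js,\jp,\jr)$ and $(\az,F,\az',F')$ held fixed. First I would observe that (\ref{foe}.e) and (\ref{foe}.f) simply prescribe $\jr_\svt=-\js\az'$ and $\jr_\lambda=\jq\az'+F'$, so they only pin down those two coordinates. The remaining four equations (\ref{foe}.a)--(\ref{foe}.d) are linear in the six unknowns $\jq_\svt,\js_\svt,\jp_\svt,\jq_\lambda,\js_\lambda,\jp_\lambda$, and their coefficient matrix is exactly the block formed by the first four rows of the $7\times6$ matrix of Lemma~\ref{onlyz}. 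By Lemma~\ref{onlyz}(c) those rows are linearly independent, so this system has rank four; as a consistent inhomogeneous system of four rank-four equations in six unknowns, its solution set is a nonempty affine subspace of dimension $6-4=2$. Adjoining the fixed values of $\jr_\svt,\jr_\lambda$, I conclude that $\mathcal{A}\subseteq\bbR\nh^8$ is a two-dimensional affine subspace.

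Next I would check the inclusion $\varPhi(\mathcal{A})\subseteq\mathcal{L}$. The first line of (\ref{dge}) is immediate: by (\ref{dqd}), $\dot\jr=\jr_\svt\dot\vt+\jr_\lambda\dot\lambda$, and substituting the values from (\ref{foe}.e)--(\ref{foe}.f) gives precisely $\dot\jr=-\js\az'\dot\vt+(\jq\az'+F')\dot\lambda$. For the second line I would expand $\jq\dot\jp+\jp\dot\jq-2\js\dot\js$ via (\ref{dqd}), grouping the $\dot\vt$- and $\dot\lambda$-coefficients as $\jq\jp_\svt+\jp\jq_\svt-2\js\js_\svt$ and $\jq\jp_\lambda+\jp\jq_\lambda-2\js\js_\lambda$; Lemma~\ref{cnseq}, namely (\ref{qbt}.i) and (\ref{qbt}.ii), rewrites these as $(\jq\jp-\js^2)\az+\jp F-\js\jr$ and $\jq\jr-\js F$, which is exactly the right-hand side of the second line of (\ref{dge}). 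So every point of $\mathcal{A}$ lands in $\mathcal{L}$. A separate routine count shows $\mathcal{L}$ is itself two-dimensional: the first line of (\ref{dge}) fixes $\dot\jr$, and the second is a single nontrivial linear constraint on $(\dot\jq,\dot\js,\dot\jp)$, nontrivial because its coefficient vector $(\jp,-2\js,\jq)$ is nonzero, since $\jq\jp>\js^2$ forces $\jq\ne0$.

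It then remains to see that $\varPhi|_{\mathcal{A}}$ is an isomorphism onto $\mathcal{L}$. As $\mathcal{A}$ and $\mathcal{L}$ are affine spaces of the same dimension two with $\varPhi(\mathcal{A})\subseteq\mathcal{L}$, it suffices to prove that the linear map $\varPhi$ of (\ref{dqd}) is injective on the direction space $V$ of $\mathcal{A}$. A vector of $V$ in the kernel satisfies the homogeneous forms of (\ref{foe}.a)--(\ref{foe}.d) together with $\dot\jq=\dot\js=\dot\jp=0$; moreover $\dot\jr=0$ automatically, the homogeneous (\ref{foe}.e)--(\ref{foe}.f) forcing $\jr_\svt=\jr_\lambda=0$. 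These seven scalar conditions are precisely the seven rows of the matrix in Lemma~\ref{onlyz}(b) applied to $(\jq_\svt,\js_\svt,\jp_\svt,\jq_\lambda,\js_\lambda,\jp_\lambda)$, so that lemma forces this sextuple, and hence the whole direction vector, to vanish. This gives injectivity, and the theorem follows.

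The genuine content is carried entirely by Lemma~\ref{onlyz}, whose proof supplies the explicit row combination built from $\varPsi=\jp\dot\vt^2-2\js\dot\vt\dot\lambda+\jq\dot\lambda^2$; once parts (b) and (c) of that lemma and the two identities of Lemma~\ref{cnseq} are available, the present theorem is bookkeeping. Accordingly, I expect no serious obstacle beyond correctly matching each row of the $7\times6$ matrix to its equation — rows $1$--$4$ to (\ref{foe}.a)--(\ref{foe}.d) and rows $5$--$7$ to the components $\dot\jq,\dot\js,\dot\jp$ of (\ref{dqd}) — and keeping track of which of the hypotheses $(\dot\vt,\dot\lambda)\ne(0,0)$ and $\jq\jp>\js^2$ each invocation of the lemmas requires.
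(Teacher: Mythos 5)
Your proof is correct and follows essentially the same route as the paper's: Lemma~\ref{onlyz}(c) gives $\dim\mathcal{A}=2$, Lemma~\ref{cnseq} gives $\varPhi(\mathcal{A})\subseteq\mathcal{L}$, the observation $\jq\ne0$ gives $\dim\mathcal{L}=2$, and Lemma~\ref{onlyz}(b) gives injectivity of the linear part. The only cosmetic difference is that you split off the two $\jr$-equations before counting dimensions, whereas the paper phrases the same count as surjectivity of a direct-sum operator $\bbR\nh^8\to\bbR\nh^6$.
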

\begin{proof}That $\,\mathcal{A}\subseteq\bbR\nh^8\nh$, or 
$\,\mathcal{L}\subseteq\bbR\nh^4\nh$, if nonempty, is an af\-fine sub\-space,
clearly follows since (\ref{foe}), or (\ref{dge}), is a system of
nonhomogeneous linear equations imposed on (\ref{qts}) or, respectively, 
$\,(\dot\jq,\dot\js,\dot\jp\nh,\dot\jr)$. Also, $\,\mathcal{A}\,$ is 
nonempty, and two\hs-di\-men\-sion\-al, being the pre\-im\-age of 
$\,(\jq\az+F\nh,\js\nh\az+\jr,0,0,-\js\nh\az'\nh,\jq\az'\nh+F')\,$ under the
obvious linear operator $\,\bbR\nh^8\nh\to\bbR\nh^6\nh$. Namely, this operator
is surjective: it equals the direct sum of the identity operator
$\,\rto\nh\to\rto\nh$, acting on $\,(\jr\nh_\svt\w,\jr\nh_\lambda\w)$, and
an operator $\,\bbR\nh^6\nh\to\bbR\nh^4$ which has rank $\,4\,$ due to
Lemma~\ref{onlyz}(c). Next, $\,\varPhi\,$ maps $\,\mathcal{A}\,$ into
$\,\mathcal{L}$, which one
sees adding (\ref{foe}.e), or (\ref{qbt}.i), times $\,\dot\vt\,$ to 
(\ref{foe}.f), or (\ref{qbt}.ii), times $\,\dot\lambda$, and then using 
(\ref{dqd}). Thus, $\,\mathcal{L}\,$ is nonempty, and 
$\,\dim\mathcal{L}=2$, the matrix of the homogeneous system associated
with (\ref{dge}) having rank $\,2\,$ since Lemma~\ref{onlyz}(a) 
gives $\,\jq\ne0$.
Let $\,\mathcal{A}\hn'\nh\subseteq\bbR\nh^8$ and 
$\,\mathcal{L}\hn'\nh\subseteq\bbR\nh^4$ now be the vector sub\-spaces
parallel to $\,\mathcal{A}\, $ and $\,\mathcal{L}$.
Our assertion will thus follow once we establish injectivity of 
$\,\varPhi:\mathcal{A}\to\mathcal{L}$, that is, injectivity of its linear
part $\,\varPhi:\mathcal{A}\hn'\nh\to\mathcal{L}\hn'\nh$. Equivalently, we
need to show that zero is the only vector (\ref{qts}) lying in
$\,\mathcal{A}\hn'$ and having the
$\,\varPhi$-im\-age $\,(0,0,0,0)\,$ or, in other words, the only solution to
the matrix equation in Lemma~\ref{onlyz},  
with $\,(\dot\jr,\jr\nh_\svt\w,\jr\nh_\lambda\w)=(0,0,0)$.
This, however, is precisely what Lemma~\ref{onlyz}(b) states.
\end{proof}

\section{The existence of solutions}\label{es}
\setcounter{equation}{0}
After the preceding foray into linear algebra, we now return to treating
(\ref{foe}) as a system of qua\-si-lin\-e\-ar first-or\-der
partial differential equations with four unknown real-val\-ued functions
$\,\jq,\js,\jp\nh,\jr\,$ of the real variables $\,\vt,\lambda$, subject to
the additional conditions (\ref{det}).

Subscripts again denote partial differentiations, while $\,\az\,$ and
$\,F\,$ are functions of the variable $\,\vt$, also depending on three fixed
real constants $\,\ve,\tz,\kappa$, so that
\begin{equation}\label{tap}
\begin{array}{l}
2\az'\nh+\hs\az^2=4\ve\mathrm{,\nnh\ where\ }\,(\,\,)'\nh=d/d\vt\hh,\\
4\ve F\nh=\hs\tz(2\hn-\hn\vt\az)+4\ve\kappa\az\,\mathrm{\ if\ }\hs\ve\ne0\hh,\\
F=\,\kappa\az\hh-\hh2\hh\tz/(3\az^2)\,\mathrm{\ when\ }\,\ve\hs=\hs0\hh,
\end{array}
\end{equation}
In addition, it is natural to assume here that
\begin{equation}\label{rng}
\vt\,\mathrm{\ ranges\ over\ the\ domain\ of\ }\,\az\,\mathrm{\ (which\ is\
also\ the\ domain\ of\ }\,F)\hh.
\end{equation}
Consequently, $\,\az\,$ and $\,F\,$ satisfy the ordinary differential equations
\begin{equation}\label{adp}
\az''\hh+\,\az\az'\hh=\,0\hh,\qquad F''\hh=\,-F\nh\az'.
\end{equation}
\begin{theorem}\label{exist}For any fixed\/ $\,\az,F\hs$ as in\/
{\rm(\ref{tap})} -- {\rm(\ref{rng})}, real-an\-a\-lyt\-ic solutions\/ 
$\,\zb=(\jq,\js,\jp\nh,\jr)\,$ to\/ {\rm(\ref{foe})} -- {\rm(\ref{det})} 
exist, locally, on a neighborhood of any\/ 
$\,(\vt,\lambda)\in\rto$ with the property\/ {\rm(\ref{rng})}.

More precisely, one obtains a lo\-cal\-ly-u\-nique such solution\/ $\,\zb\,$
by prescribing\/ $\,\zb$ and the partial derivatives\/ 
$\,\zb_\svt\w,\zb\nh_\lambda\w$ real-an\-a\-lyt\-ic\-al\-ly along an arbitrary
real-an\-a\-lyt\-ic embedded curve\/ $\,t\mapsto(\vt,\lambda)\in\rto\nh$,
so as to satisfy\/ {\rm(\ref{foe})}, {\rm(\ref{det})},
{\rm(\ref{rng})}, and the condition\/ $\,\dzb=\dot\vt\zb\hn_\svt\w
+\dot\lambda\hs\zb\nh_\lambda\w$, where\/ $\,(\,\,)\dot{\,}=\,d/dt$.
\end{theorem}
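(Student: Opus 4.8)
The plan is to derive Theorem~\ref{exist} from the Car\-tan-K\"ah\-ler theorem, with Theorem~\ref{affsb} providing the non\hh-char\-ac\-ter\-is\-tic condition and the unique extension of integral elements that make the Cauchy problem well posed. First I would rewrite (\ref{foe}) as a real-an\-a\-lyt\-ic exterior differential system. In the first-or\-der jet bundle with base coordinates $\,(\vt,\lambda)$, fibre coordinates $\,(\jq,\js,\jp,\jr)\,$ and the eight derivative coordinates (\ref{qts}), let $\,R\,$ be the locus on which (\ref{foe}) and the open conditions (\ref{det}) hold; by Theorem~\ref{affsb} the set of admissible derivatives over each point is the two\hs-di\-men\-sion\-al affine space $\,\mathcal{A}$, so $\,R\,$ is an eight\hh-di\-men\-sion\-al real-an\-a\-lyt\-ic manifold, re\-al-an\-a\-lyt\-ic\-i\-ty of $\,\az\,$ and $\,F\,$ being guaranteed by (\ref{tap}) and (\ref{adp}). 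On $\,R\,$ I take the contact ideal generated by the four $1$-forms $\,d\jq-\jq_\svt\,d\vt-\jq_\lambda\,d\lambda\,$ and their analogues for $\,\js,\jp,\jr$, with independence condition $\,d\vt\wedge d\lambda\ne0$; its two\hs-di\-men\-sion\-al integral manifolds transverse to the base are precisely the prolonged graphs of solutions of (\ref{foe}) -- (\ref{det}).

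Next I would realize the prescribed Cauchy data as a regular integral curve $\,P\subset R$. Prescribing $\,\zb\,$ and the derivatives $\,\zb_\svt,\zb_\lambda\,$ along the base curve $\,t\mapsto(\vt,\lambda)$, subject to (\ref{foe}), (\ref{det}), (\ref{rng}), lifts it to a curve in $\,R$, and the compatibility relation $\,\dzb=\dot\vt\,\zb_\svt+\dot\lambda\,\zb_\lambda\,$ is exactly the requirement that the contact forms vanish on the lift, i.e.\ that $\,P\,$ be an integral curve with $\,(\dot\vt,\dot\lambda)\ne(0,0)$. Consistent data exists: after choosing $\,\zb(t)\,$ with $\,\dzb\in\mathcal{L}$ -- the two linear constraints (\ref{dge}), equivalently the statement that $\,\dzb\,$ lies in the image of $\,\varPhi$ -- Theorem~\ref{affsb} produces the unique admissible $\,(\zb_\svt,\zb_\lambda)\in\mathcal{A}$, while (\ref{det}) and (\ref{rng}) are preserved along a short curve.

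The crucial step, carrying all of Theorem~\ref{affsb}, is the unique extension of integral elements. A one\hh-di\-men\-sion\-al integral element along $\,P\,$ projects to a nonzero base direction $\,(\dot\vt,\dot\lambda)\,$ and records the tangential derivative $\,\dzb=(\dot\jq,\dot\js,\dot\jp,\dot\jr)\in\mathcal{L}$; enlarging it to a two\hs-di\-men\-sion\-al integral element transverse to the base is exactly the task of recovering the full derivative array $\,(\zb_\svt,\zb_\lambda)\in\mathcal{A}\,$ from $\,\dzb\,$ and the direction, and Theorem~\ref{affsb} asserts that $\,\varPhi:\mathcal{A}\to\mathcal{L}\,$ is an affine iso\-mor\-phism, so this recovery is unique. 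Thus every base direction is non\hh-char\-ac\-ter\-is\-tic and every integral line extends uniquely -- the unique\hs-extension property that makes the initial curve a regular integral manifold. I expect this translation to be the main obstacle: one must pass from the purely lin\-e\-ar-al\-ge\-bra\-ic content of Theorem~\ref{affsb} to regularity of the Car\-tan-K\"ah\-ler flag, verifying that the polar space of each integral line along $\,P\,$ has the dimension forcing a single two\hs-di\-men\-sion\-al extension. The integrability needed here is already present in the set\-up: conditions (\ref{foe}.c) -- (\ref{foe}.d) are absorbed into the fibre $\,\mathcal{A}$, and the mixed\hh-par\-tial consistency of the equations (\ref{foe}.e) -- (\ref{foe}.f) for $\,\jr\,$ follows from (\ref{foe}.a) and (\ref{adp}).

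Finally, the Car\-tan-K\"ah\-ler theorem in the real-an\-a\-lyt\-ic category extends the regular integral curve $\,P\,$ to a real-an\-a\-lyt\-ic two\hs-di\-men\-sion\-al integral manifold $\,\Sigma\subset R$, unique near $\,P\,$ by the uniqueness clause of that theorem. Since $\,d\vt\wedge d\lambda\ne0\,$ on $\,\Sigma$, it is a graph over a neighborhood of the base curve, hence the prolongation of a real-an\-a\-lyt\-ic map $\,\zb=(\jq,\js,\jp,\jr)\,$ solving (\ref{foe}); the open conditions (\ref{det}) and the range condition (\ref{rng}) survive near $\,P\,$ by continuity. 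This yields the locally unique real-an\-a\-lyt\-ic solution with the prescribed Cauchy data, and in particular the local existence asserted first in the theorem.
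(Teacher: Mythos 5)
Your overall architecture --- translate (\ref{foe}) into a real-an\-a\-lyt\-ic exterior differential system, realize the Cauchy data as a horizontal integral curve, use Theorem~\ref{affsb} to get unique extension of integral lines, and invoke Car\-tan--K\"ah\-ler --- is exactly the paper's (Sections~\ref{ae}--\ref{ei}). The one genuine divergence is where you set the system up: you prolong to the first-or\-der jet space and work with the contact ideal on the eight-di\-men\-sion\-al locus $R$, whereas the paper works \emph{unprolonged}, on the six-di\-men\-sion\-al $\mathcal{Y}$ with coordinates $(\vt,\lambda,\jq,\js,\jp\nh,\jr)$, generating $\mathcal{I}$ by the $1$- and $2$-forms of (\ref{ids}) built directly from (\ref{foe}) and (\ref{qbt}). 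This choice matters for your key step. In the paper's setup a point of $\mathcal{Y}$ does \emph{not} record the derivatives, so a horizontal integral plane is literally the graph of a candidate derivative array (\ref{qts}) (the unique basis (\ref{bas})), and Theorem~\ref{affsb} (the affine isomorphism $\varPhi:\mathcal{A}\to\mathcal{L}$) \emph{is} the statement that each horizontal integral line extends to a unique horizontal integral plane (Theorem~\ref{exone}). Your third paragraph argues precisely this --- ``recovering the full derivative array $(\zb_\svt\w,\zb\nh_\lambda\w)\in\mathcal{A}$ from $\dzb$'' --- but that argument belongs to the unprolonged picture, not to the jet-space system you defined in your first paragraph, where $(\zb_\svt\w,\zb\nh_\lambda\w)$ is already encoded in the point of $R$ and the extension problem for the contact ideal concerns the \emph{second}-de\-riv\-a\-tive directions tangent to $R$ (the polar equations $d\theta(v,\cdot)=0$). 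Theorem~\ref{affsb} does not compute those polar spaces, and you acknowledge the difficulty (``I expect this translation to be the main obstacle'') without resolving it; as written, the regularity of your integral curve for the contact system on $R$ is not established.

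The repair is simply to drop the prolongation and argue on $\mathcal{Y}$ as the paper does: then your unique-extension paragraph is correct essentially verbatim, Remark~\ref{unqhr} upgrades ``unique horizontal integral plane'' to ``unique integral plane,'' and (\ref{fxd}) supplies the constant value of $r(E)$ needed for K\"a\-hler-reg\-u\-lar\-i\-ty of the initial curve. Everything else in your proposal --- constructing admissible Cauchy data by applying $\varPhi^{-1}$ to a prescribed $\dzb\in\mathcal{L}$, the openness of (\ref{det}) and (\ref{rng}), and reading the resulting horizontal integral surface as a graph solving (\ref{foe}) --- matches Remark~\ref{chdat} and the paper's proof of Theorem~\ref{exist}.
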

We prove Theorem~\ref{exist} at the end of Section~\ref{ei}. 
As we point out in Remark~\ref{chdat}, there is an
in\-fi\-nite\hs-di\-men\-sion\-al freedom of choosing the data described in
the second paragraph of Theorem~\ref{exist}

\section{The associated exterior differential system}\label{ae}
\setcounter{equation}{0}
By an {\it exterior differential system\/} on a manifold $\,M\,$ one means an
ideal $\,\mathcal{I}\hs$ in the graded algebra $\,\varOmega\hh^*\nnh M\nh$, closed
under exterior differentiation; its {\it integral manifolds\/} (or, {\it 
integral elements\/}) are those sub\-man\-i\-folds of
$\,M\,$ (or, sub\-spaces of its tangent spaces) on which every form in
$\,\mathcal{I}\hs$ 
vanishes \cite[pp.\,16,\,65]{bryant-chern-gardner-goldschmidt-griffiths}.
When such 
objects have dimension $\,1\,$ or $\,2$, we 
call them {\it integral curves}/\nnh{\it sur\-faces\/} or {\it lines}/\nnh{\it
planes}.

If $\,E\subseteq\tzm\,$ is a $\,p\hs$-di\-men\-sion\-al integral element of
$\,\mathcal{I}\nh$, one sets  
\cite[pp.\,67-68]{bryant-chern-gardner-goldschmidt-griffiths}:
\begin{equation}\label{hee}
\begin{array}{l}
H\nh(E)=\{v\in\tzm\nnh:\zeta(v,e_1\w,\dots,e_p\w)\,=\,0\,\mathrm{\ for\ all\
}\hs\zeta\in\mathcal{I}\cap\varOmega\hh^{p\hs+1}\nnh M\}\\
\mathrm{and\ }\,r(E)\hs=\hs\dim H\nh(E)-(p+1)\mathrm{,\ for\ any\ basis\
}\,e_1\w,\dots,e_p\w\mathrm{\ of\ }\,E,
\end{array}
\end{equation}
so that $\,H\nh(E)\,$ is a vector sub\-space of $\,\tzm\nh$, not depending on
$\,e_1\w,\dots,e_p\w$ since
\begin{equation}\label{hsp}
H\nh(E)\,=\,\{v\in\tzm\nnh:\mathrm{span}(v,E)\,\mathrm{\ is\ an\ integral\ element\
of\ }\,\mathcal{I}\}\hh.
\end{equation}
For fixed real constants $\,\ve,\tz,\kappa$, let the open subset
$\,\mathcal{Y}\hs$ of $\,\bbR\nh^6$
consist of all points 
$\,(\vt,\lambda,\jq,\js,\jp\nh,\jr)\in\bbR\nh^6$ such that
$\,\jq\,$ and $\,\jq\nh\jp-\js^2$ are both positive, while $\,\vt\,$ lies in 
the domains of $\,\az\,$ and $\,F\,$ chosen so as to satisfy (\ref{tap}). 
Consider now
\begin{equation}\label{ids}
\begin{array}{l}
\mathrm{the\ exterior\ differential\ system\ 
}\,\,\mathcal{I}\,\hh\hs\mathrm{ \ on\ this\ 
}\,\hh\hs\mathcal{Y}\,\mathrm{\ generated}\\
\mathrm{by\ the\ two\ }\,1\hyp\mathrm{forms\ }\,\,d\jr\,+\,\js\nh\az'd\vt\,
-\,(\jq\az'\nh+\hs F')\,d\lambda\,\,\mathrm{\ and}\\
d\hh(\jq\nh\jp\nnh-\nnh\js^2)\nnh-\nnh[(\jq\nh\jp\nnh-\nnh\js^2)\az\nnh
+\nnh\jp F\nnh-\nnh\js\jr]\hs d\vt\nnh
-\nnh(\jq\jr\nnh-\nnh\js F)\hs d\lambda\hh,\\
\mathrm{the\ }\,\hs2\hyp\mathrm{form\ \ }\,d\jq\wedge d\lambda\,\,
+\,\,d\vt\wedge\hh d\js\,\,-\,\,(\jq\az\hs+\,F)\,d\vt\wedge d\lambda\hh,\\
\mathrm{the\ \ }\,\hs2\hyp\mathrm{form\ \ \ }\hh\,d\js\wedge d\lambda\hs\,
+\hs\,d\vt\hs\wedge d\jp\hs\,
-\,(\js\nh\az\hs+\hs\jr)\,d\vt\wedge d\lambda\hh,\\
\mathrm{their\hn\ exterior\hn\ derivatives,\nh\ and\hn\ the\hn\ exterior\hn\ 
derivatives\hn\ of}\\
(\jq\nh\jp\nnh-\nnh\js^2)^{-\nnh1}\nh(\jp\hs d\vt\nh-\nh\js\hs d\lambda)\,
\mathrm{\ and\ }\,(\jq\nh\jp\nnh-\nnh\js^2)^{-\nnh1}\nh(\js\hs d\vt\nh
-\nh\jq\,d\lambda)\hn.
\end{array}
\end{equation}
The choice of $\,\mathcal{I}\hs$ is justified as follows. We want
$\,\mathcal{I}\hs$ to consist of differential forms that are expected
to vanish on all graphs of solutions to (\ref{foe}) with (\ref{det}). Since
such a graph is 
horizontal (in the sense that $\,d\vt\wedge d\lambda\ne0\,$ on it), forms
generating $\,\mathcal{I}\hs$ will result from multiplying 
by $\,d\vt\wedge d\lambda\,$ the left-hand sides in (\ref{foe}),
along with those in its consequence (\ref{qbt}), 
as including the latter changes nothing except for enriching the differential
system. Each of the terms $\,\varXi\jz\nh_\svt\w$, or
$\,\varXi\jz\hskip-2.2pt_\lambda\w$, or $\,\varTheta$, where
$\,\jz\in\{\jq,\js,\jp\nh,\jr\}\,$ (with various $\,\varXi\,$ depending on
$\,\jq,\js,\jp\nh,\jr$, and $\,\varTheta\,$ which may further depend also on
$\,\az,F\nh,\az'\nh,F'$) then becomes
$\,\varXi\,d\jz\wedge d\lambda$, or $\,\varXi\,d\vt\wedge d\jz$, or 
simply $\,\varTheta\,d\vt\wedge d\lambda$.

The $\,2$-forms in the fourth and fifth lines of (\ref{ids}) arise as
above from (\ref{foe}.a) and (\ref{foe}.b), the exterior
derivatives of the $\,1$-forms in the last line -- from (\ref{foe}.c) and
(\ref{foe}.d) via (\ref{php}). The
rest of (\ref{ids}) is based on an additional principle: if our exterior 
differential system ends up containing $\,\xi\wedge d\vt\,$ and
$\,\xi\wedge d\lambda$, for some $\,1$-form $\,\xi$, we are free to
include $\,\xi\,$ among the
system's generators, as the horizontal integral sur\-faces/planes then
obviously remain unaffected. The $\,1$-form $\,\xi\,$ in the second, or third,
line of (\ref{ids}) corresponds in this way to (\ref{foe}.e) -- (\ref{foe}.f)
or, respectively, (\ref{qbt}).

Instead of invoking the general ``additional principle'' one can also justify
the second and third lines in (\ref{ids}) directly from the fact that
(\ref{dqb}) is a consequence of (\ref{foe}), which causes the two $\,1$-forms
to vanish on all graphs of solutions to (\ref{foe}).

\section{The unique\hs-extension theorem}\label{ue}
\setcounter{equation}{0}
In $\,\bbR\nh^6$ with the coordinates $\,\vt,\lambda,\jq,\js,\jp\nh,\jr$, 
given a sub\-space $\,E\subseteq\bbR\nh^6\nh$,
\begin{equation}\label{hrz}
\mathrm{we\ call\ }\,E\,\mathrm{\ horizontal\ when\
}\,(d\vt,d\lambda)\nnh:\nnh E\hs\to\rto\mathrm{\ is\ injective.}
\end{equation}
\begin{remark}\label{unqhr}If $\,E_1\w$ is an integral line of 
$\,\mathcal{I}\hs$ in (\ref{ids}) and $\,E_1\w\subseteq E_2\w$ for 
a unique horizontal integral plane $\,E_2\w$, then $\,E_2\w$ is 
the only integral plane containing $\,E_1\w$. Namely, another such plane 
$\,E_2'$, being non\-hor\-i\-zon\-tal, would intersect the kernel of
$\,(d\vt,d\lambda)\,$ along a line. As 
$\,E_3\w\subseteq H\nh(E_1\w)\,$ for the vector sub\-space $\,H\nh(E)\,$ in
(\ref{hsp}) and the three-di\-men\-sion\-al span $\,E_3\w$ of
$\,E_2\w$ and $\,E_2'$, all planes in $\,E_3\w$ containing the line $\,E_1\w$,
other than $\,E_2'$, would be horizontal integral planes, making $\,E_2\w$
nonunique.
\end{remark}
\begin{remark}\label{simkr}The only $\,1$-forms in $\,\mathcal{I}\hs$ are,
obviously, the functional combinations of those in the second and third 
lines of (\ref{ids}). Due to their linear independence at every point, the
simultaneous kernel of these two $\,1$-forms is a co\-dimen\-sion-two 
distribution $\,\mathcal{D}\,$ on $\,\mathcal{Y}\nh$, that is, a vector
sub\-bun\-dle of $\,T\hn\mathcal{Y}\nh$, and its fibre at any 
$\,(\vt,\lambda,\jq,\js,\jp\nh,\jr)\in\mathcal{Y}\,$ consists of all
$\,(\dot\vt,\dot\lambda,\dot\jq,\dot\js,\dot\jp\nh,\dot\jr)\in\bbR\nh^6$
with (\ref{dge}). Hence
\begin{equation}\label{cha}
\begin{array}{l}
\mathrm{vectors\ 
}\hs(\dot\vt,\dot\lambda,\dot\jq,\dot\js,\dot\jp\nh,\dot\jr)\hs
\mathrm{\ at\ }\hs(\vt,\lambda,\jq,\js,\jp\nh,\jr)\hs\mathrm{\ spanning\ 
horizontal}\\
\mathrm{integral\ lines\ of\ }\,\,\mathcal{I}\,\mathrm{\ are\ 
characterized\ by\ (\ref{dge})\ and\ }\,(\dot\vt,\dot\lambda)\hs\ne(0,0),
\end{array}
\end{equation}
where $\,\az,F\nh,\az'\nh,F'$ satisfy (\ref{tap}) -- (\ref{rng}).

\end{remark}
\begin{theorem}\label{exone}Every horizontal integral line of the system\/ 
$\,\mathcal{I}\hs$ defined by\/ {\rm(\ref{ids})} is contained in a
unique integral plane of\/ $\,\mathcal{I}\nh$, and this unique plane
is also horizontal.
\end{theorem}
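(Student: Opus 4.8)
The plan is to leverage Theorem~\ref{affsb} (the linear-algebra result) to show that at every point of $\mathcal{Y}$, the horizontal integral lines of $\mathcal{I}$ through a given direction determine a unique integral plane. The key observation is that Theorem~\ref{affsb} already encodes everything: it says that once we fix a direction $(\dot\vt,\dot\lambda)\ne(0,0)$ and the point data $(\jq,\js,\jp,\jr)$ together with $(\az,F,\az',F')$, the partial-derivative data $(\jq_\svt,\dots,\jr_\lambda)\in\bbR^8$ satisfying the full system~(\ref{foe}) form a $2$-dimensional affine space $\mathcal{A}$, and the evaluation map $\varPhi$ sending these partials to the directional derivative $(\dot\jq,\dot\js,\dot\jp,\dot\jr)$ is an affine \emph{isomorphism} onto the space $\mathcal{L}$ cut out by~(\ref{dge}). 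By Remark~\ref{simkr}, the vectors spanning horizontal integral lines are exactly those satisfying~(\ref{dge}), i.e.\ lying over $\mathcal{L}$.

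First I would fix a horizontal integral line $E_1=\mathrm{span}(v_1)$ at a point of $\mathcal{Y}$, writing $v_1=(\dot\vt,\dot\lambda,\dot\jq,\dot\js,\dot\jp,\dot\jr)$ with $(\dot\vt,\dot\lambda)\ne(0,0)$ and $(\dot\jq,\dots,\dot\jr)\in\mathcal{L}$. I then want to produce a horizontal integral plane $E_2\supseteq E_1$ and show it is unique. To build $E_2$, apply $\varPhi^{-1}$ (from Theorem~\ref{affsb}) to recover the unique element $(\jq_\svt,\dots,\jr_\lambda)\in\mathcal{A}$ whose $\varPhi$-image is $(\dot\jq,\dot\js,\dot\jp,\dot\jr)$; this partial-derivative data defines a candidate $2$-plane, namely the graph-tangent plane spanned by the two vectors $e_\svt=(1,0,\jq_\svt,\js_\svt,\jp_\svt,\jr_\svt)$ and $e_\lambda=(0,1,\jq_\lambda,\js_\lambda,\jp_\lambda,\jr_\lambda)$. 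This plane is horizontal by construction (its projection to $\rto$ is surjective), and I would verify it is an integral plane by checking that all the generating $1$- and $2$-forms of $\mathcal{I}$ in~(\ref{ids}) vanish on it — the $2$-forms vanish precisely because the partials satisfy~(\ref{foe}.a)--(\ref{foe}.d), and the $1$-forms vanish by~(\ref{foe}.e)--(\ref{foe}.f); the remaining generators (the closed $1$-forms in the last line and all the listed exterior derivatives) vanish on any horizontal $2$-plane because they are either exact or of top wedge-degree considerations that reduce to the first four equations via~(\ref{php}). Note $v_1\in E_2$ since $v_1=\dot\vt\,e_\svt+\dot\lambda\,e_\lambda$ by the definition~(\ref{dqd}) of $\varPhi$.

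For uniqueness among \emph{horizontal} integral planes, I would argue that any horizontal integral plane containing $E_1$ has a tangent frame of the form $e_\svt,e_\lambda$ above (after normalizing the projection to the coordinate basis of $\rto$), so its partial data must again lie in $\mathcal{A}$ and must $\varPhi$-map to $(\dot\jq,\dots,\dot\jr)$ — and injectivity of $\varPhi$ on $\mathcal{A}$ forces it to coincide with $E_2$. Finally, to upgrade ``unique among horizontal planes'' to ``unique among \emph{all} integral planes'' and conclude $E_2$ is horizontal, I would invoke Remark~\ref{unqhr}: since $E_1$ sits inside a unique horizontal integral plane, that remark shows no non-horizontal integral plane can contain $E_1$ either, giving global uniqueness.

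\textbf{The main obstacle} I anticipate is the bookkeeping in the integral-plane verification: one must confirm that \emph{every} generator of $\mathcal{I}$ — including the exterior derivatives of the two closed $1$-forms and the $d$-closures of the other generators — actually vanishes on the candidate plane, and that these impose no conditions beyond~(\ref{foe}). The cleanest route is to observe that all such forms are consequences of~(\ref{foe}.a)--(\ref{foe}.d) and~(\ref{qbt}) (which Lemma~\ref{cnseq} derives from~(\ref{foe})), so that restricting to the $2$-plane whose partials solve~(\ref{foe}) makes them automatically vanish; the wedge structure, via~(\ref{php}) and the identity~(\ref{dqb}), ensures the closed $1$-forms contribute nothing new on a horizontal plane. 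Once this closure check is organized around Theorem~\ref{affsb}, the rest is an essentially formal application of the isomorphism $\varPhi:\mathcal{A}\to\mathcal{L}$ together with Remarks~\ref{unqhr} and~\ref{simkr}.
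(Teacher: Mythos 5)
Your proposal is correct and follows essentially the same route as the paper: reduce integrality of the span of the basis (\ref{bas}) to the system (\ref{foe}) (with (\ref{qbt}) absorbed via Lemma~\ref{cnseq} and (\ref{php})), use the affine isomorphism $\varPhi:\mathcal{A}\to\mathcal{L}$ of Theorem~\ref{affsb} together with the characterization (\ref{cha}) to get existence and uniqueness of the horizontal integral plane, and then apply Remark~\ref{unqhr} to rule out non-horizontal integral planes.
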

\begin{proof}Any horizontal plane in $\,\bbR\nh^6$ has, by
(\ref{hrz}), a unique basis of the form
\begin{equation}\label{bas}
(1,0,\jq\hn_\svt\w,\js\nh_\svt\w,\jp\hskip-1.3pt_\svt\w,\jr\nh_\svt\w)\hh,\quad
(0,1,\jq\nh_\lambda\w,\js\nh_\lambda\w,\jp\hskip-1.3pt_\lambda\w,
\jr\nh_\lambda\w)
\end{equation}
The span of (\ref{bas}) is an {\it integral plane of\/}
$\,\mathcal{I}\hs$
if and only if all the $\,1$-forms (and, $\,2$-forms) listed in (\ref{ids}) 
yield the value $\,0\,$ when evaluated on both vectors in (\ref{bas}) or,
respectively, on the pair (\ref{bas}). Due to the two final paragraphs of
Section~\ref{ae}, and (\ref{php}), this is equivalent to (\ref{foe}) and 
(\ref{qbt}), and hence (Lemma~\ref{cnseq}) just to (\ref{foe}).

Every vector in (\ref{cha}) is a linear combination of a unique pair
(\ref{bas}) satisfying (\ref{foe}): as the coefficients of the combination
must be $\,\dot\vt$ and $\,\dot\lambda$, this is immediate from
Theorem~\ref{affsb}. In other words, every horizontal integral line of
$\,\mathcal{I}\hs$ lies within a unique horizontal integral plane.
Remark~\ref{unqhr} now allows us to drop the last occurrence of the word
`horizontal', completing the proof.
\end{proof}

\section{Existence of integral surfaces}\label{ei}
\setcounter{equation}{0}
The next fact -- used below to derive our Theorem~\ref{exist} --
is a special case of the celebrated Car\-tan-K\"ah\-ler 
theorem \cite[pp.\,81--82]{bryant-chern-gardner-goldschmidt-griffiths}. Since
our phrasing differs from that of 
\cite{bryant-chern-gardner-goldschmidt-griffiths}, we devote the next section
to clarifying how our version amounts to adapting the one in
\cite{bryant-chern-gardner-goldschmidt-griffiths} to our
particular case.

The symbols $\,\mathcal{Y}\nh,\mathcal{I}\hs$ and $\,\mathcal{D}\,$ stand
here for more general objects that those in Sections~\ref{ae}--\ref{ue}. 
The definition 
(\ref{hrz}) of horizontality, for integral elements, is used more generally,
as well as extended, in an obvious fashion, to integral manifolds.
\begin{theorem}\label{cakae}Let real-an\-a\-lyt\-ic functions\/
$\,\vt,\lambda\,$ and\/ $1$-forms\/ $\,\xi_1\w,\dots,\xi_q\w$ on a
manifold\/ $\,\mathcal{Y}\nnh$, where\/ $\,0<\hs q<\dim\mathcal{Y}\nnh$, have
the property that\/ $\,d\vt,d\lambda,\,\xi_1\w,\dots,\xi_q\w$ are
linearly independent at every point. Denoting by\/ $\,\mathcal{D}\hs$ and\/ 
$\,\mathcal{I}$ the distribution on\/ $\,\mathcal{Y}\nh$ arising as the 
simultaneous kernel of the\/ $1$-forms\/ $\,\xi_1\w,\dots,\xi_q\w$ and,
respectively, the exterior differential system on\/ $\,\mathcal{Y}\nh$ 
generated by\/ $\,\xi_1\w,\dots,\xi_q\w$ and, possibly, some high\-er-de\-gree
forms, along with their exterior derivatives, let us suppose that
\begin{equation}\label{evh}
\begin{array}{l}
\mathrm{every\hskip1.15pt\ horizontal\hskip1.15pt\ integral\hskip1.15pt\
line\hskip1.15pt\ of\hskip1.15pt\
}\hskip1.15pt\mathcal{I}\nh\mathrm{,\hskip1.15pt\ at\hskip1.15pt\
any\hskip1.15pt\ point}\\
\mathrm{of\ }\mathcal{Y}\nnh\mathrm{,\nnh\ is\ contained\ in\ a\ unique\ 
integral\ plane\ of\ }\,\mathcal{I}\nh.
\end{array}
\end{equation}
Then every horizontal real-an\-a\-lyt\-ic integral curve of\/
$\,\mathcal{I}\hs$ is contained, locally, in a lo\-cal\-ly-u\-nique
horizontal real-an\-a\-lyt\-ic integral surface. 
Examples of such curves are provided by un\-pa\-ram\-e\-trized integral curves 
of any real-an\-a\-lyt\-ic vector field without zeros forming a horizontal
local section of the
vector bundle\/ $\,\mathcal{D}\hs$ over\/ $\,\mathcal{Y}\nnh$. Also,
\begin{equation}\label{igl}
\mathrm{integral\ lines\ of\ }\,\mathcal{I}\hs\mathrm{\ are\ the\ same\
as\ lines\ tangent\ to\ }\,\mathcal{D}.
\end{equation}
\end{theorem}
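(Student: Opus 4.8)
The plan is to obtain Theorem~\ref{cakae} as a specialization of the Cartan-K\"ahler theorem of \cite[pp.\,81--82]{bryant-chern-gardner-goldschmidt-griffiths}, in its form governing the Cauchy problem of extending a one-dimensional integral manifold to a two-dimensional one under the independence condition $d\vt\wedge d\lambda\ne0$, which is exactly horizontality (\ref{hrz}). First I would settle (\ref{igl}) and the ``examples'' clause, which are elementary: since $\mathcal{I}$ is generated by the $1$-forms $\xi_1,\dots,\xi_q$ together with forms of degree $\ge2$ and the exterior derivatives of all generators, its degree-one part is the functional span of $\xi_1,\dots,\xi_q$; a line is therefore an integral element if and only if it is annihilated by every $\xi_i$, i.e.\ tangent to $\mathcal{D}$. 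Consequently the integral curves of a nowhere-zero real-analytic horizontal section of $\mathcal{D}$ have all tangent lines in $\mathcal{D}$, so they are integral by (\ref{igl}) and horizontal because the section is.

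The substance of the argument is to translate the polar data (\ref{hee})--(\ref{hsp}) into the present setting and to verify, for the flag $0=E_0\subset E_1=T_xC$ along a horizontal real-analytic integral curve $C$, the regularity required by Cartan-K\"ahler. The element $E_0$ is always regular, as $H(E_0)=\mathcal{D}_x$ has the constant dimension $\dim\mathcal{Y}-q$. For $E_1$ I would argue exactly as in Remark~\ref{unqhr}: an integral plane exists through a line $E_1$ precisely when $\dim H(E_1)\ge2$, and $\dim H(E_1)\ge3$ would produce a whole pencil of integral planes through $E_1$, every $2$-plane spanned by $E_1$ and a vector of $H(E_1)$ being integral by (\ref{hsp}). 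Hence the uniqueness hypothesis (\ref{evh}) is equivalent to $\dim H(E_1)=2$, that is, to $r(E_1)=\dim H(E_1)-(1+1)=0$, with $H(E_1)$ itself the sole integral plane through $E_1$. Local constancy of $\dim H$ near $E_1$ -- the defining property of a regular integral element -- then follows by combining (\ref{evh}) with the openness of horizontality: every integral line sufficiently close to the horizontal line $E_1$ is again horizontal, so (\ref{evh}) applies to it and forces $\dim H\equiv2$ throughout a neighborhood.

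With the flag $E_0\subset E_1$ thus ordinary and $r(E_1)=0$, I would invoke the cited theorem in its independence-condition form, taking $C$ as initial data: since $r=0$, the polar space $H(T_xC)$ is the unique integral plane that the sought surface can be tangent to along $C$, and the theorem produces a locally unique real-analytic integral surface $S$ containing $C$, realized as a graph over the $(\vt,\lambda)$-plane and therefore horizontal. Its local uniqueness in the horizontal class is then the assertion that $C$ determines $S$, which may also be read off pointwise from Remark~\ref{unqhr} together with the characterization (\ref{cha}). The one place where the concrete structure enters is the horizontality of the tangent plane $T_xS=H(T_xC)$ itself; in the situation to which Theorem~\ref{cakae} is applied this is precisely the content supplied by Theorem~\ref{affsb}, whose affine isomorphism $\varPhi:\mathcal{A}\to\mathcal{L}$ shows every admissible direction to decompose along a horizontal pair (\ref{bas}) solving (\ref{foe}).

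The hard part will be the faithful transcription of the abstract apparatus of \cite{bryant-chern-gardner-goldschmidt-griffiths} -- K\"ahler-regularity of the integral flag and the accompanying local constancy of the polar rank -- into the single concrete hypothesis (\ref{evh}). The delicate point is that regularity in \cite{bryant-chern-gardner-goldschmidt-griffiths} refers to \emph{all} integral $1$-elements near $E_1$, whereas (\ref{evh}) constrains only the horizontal ones; the reconciliation rests entirely on the openness of the horizontality condition (\ref{hrz}), by which nearness to a horizontal integral line already guarantees horizontality, so that (\ref{evh}) does govern a full neighborhood of $E_1$ among integral $1$-elements. Once (\ref{evh}) has been identified in this way with the ordinary, $r=0$ situation, the existence, local uniqueness, and horizontality of the extending surface are formal consequences of the quoted theorem.
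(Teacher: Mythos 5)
Your proposal follows essentially the same route as the paper's Section~\ref{wt}: both reduce the statement to the Car\-tan-K\"ah\-ler theorem of \cite{bryant-chern-gardner-goldschmidt-griffiths} with $(p,r)=(1,0)$, by showing that the tangent lines of a horizontal real-an\-a\-lyt\-ic integral curve are K\"a\-hler-reg\-u\-lar, the hypothesis (\ref{evh}) being precisely what forces $\dim H(E_1)=2$ and $r(E_1)=0$. The only divergence is in how regularity is certified -- the paper invokes Car\-tan's test, comparing $\mathrm{codim}\,H(E_0)=q$ with the co\-dimen\-sion of the manifold of integral lines in the Grass\-mann\-i\-an, whereas you compute $\dim H(E_1)$ directly from (\ref{evh}) and obtain local constancy of the polar rank from the openness of horizontality, a point the paper leaves implicit -- and you correctly note that horizontality of the resulting surface ultimately rests on the concrete structure (the paper's Theorem~\ref{exone}, built on Theorem~\ref{affsb}) rather than on (\ref{evh}) alone.
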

\begin{remark}\label{chdat}Due to Theorem~\ref{exone}, our 
$\,\mathcal{Y}\,$ and $\,\mathcal{I}\nh$, introduced in 
Section~\ref{ae}, satisfy the hypotheses of Theorem~\ref{cakae}, with 
$\,q=2$, the coordinate  functions $\,\vt,\lambda$, and the 
two $\,1$-forms in the second and third lines of (\ref{ids}). Therefore, 
our $\,\mathcal{D}$ (see Remark~\ref{simkr}) then corresponds to
$\,\mathcal{D}\,$ in Theorem~\ref{cakae}, and hence satisfies (\ref{igl}).
Horizontal integral curves of our $\,\mathcal{I}\hs$ thus are, by (\ref{cha}), 
precisely those curves which have parametrizations
$\,t\mapsto(\vt,\lambda,\zb)=(\vt,\lambda,\jq,\js,\jp\nh,\jr)\in\mathcal{Y}\,$
with (\ref{dge}), where $\,(\,\,)\dot{\,}=\,d/dt$, and
$\,(\dot\vt,\dot\lambda)\hs\ne(0,0)\,$ for all $\,t$. Choosing such a curve as
in the sentence preceding (\ref{igl}), we obtain the additional data
$\,\zb_\svt\w,\zb\nh_\lambda\w$ required in Theorem~\ref{exist} by
applying to $\,\dzb\,$ the inverse of the af\-fine iso\-mor\-phism 
$\,\varPhi:\mathcal{A}\to\mathcal{L}\,$ of Theorem~\ref{affsb}. This clearly
results in an in\-fi\-nite\hs-di\-men\-sion\-al freedom of choices mentioned
at the end of Section~\ref{sl}.
\end{remark}
\begin{proof}[Proof of Theorem~\ref{exist}]
The image of the mapping
$\,t\mapsto(\vt,\lambda,\zb)\in\mathcal{Y}\,$ in the second paragraph of 
Theorem~\ref{exist} is a horizontal
real-an\-a\-lyt\-ic integral curve of $\,\mathcal{I}\nh$. In fact,
horizontality follows since $\,t\mapsto(\vt,\lambda)\,$ is an embedding,
while, as $\,\dzb=\dot\vt\zb\hn_\svt\w+\dot\lambda\hs\zb\nh_\lambda\w$, 
the resulting tangent directions are integral lines of $\,\mathcal{I}\hs$
as an immediate consequence of Theorem~\ref{exone} combined with (\ref{cha}).

The integral surface of $\,\mathcal{I}\hs$ arising in Theorem~\ref{cakae},
being horizontal (Theorem~\ref{exone}), forms, locally, the graph of a
function $\,(\vt,\lambda)\mapsto\zb=(\jq,\js,\jp\nh,\jr)$, which is
a solution to (\ref{foe}) according to the description of $\,\mathcal{I}\hs$
in (\ref{ids}) and the paragraph following (\ref{ids}). To realize the
condition $\,\jq\nh_\lambda\w\nh\ne0\,$ required in (\ref{det}) we solve
(\ref{foe}), at a given point
$\,(\vt,\lambda,\jq,\js,\jp\nh,\jr)\in\mathcal{Y}\nh$, by setting
$\,(\jq\hn_\svt\w,\jq\nh_\lambda\w)=(0,1)$, which uniquely determines
$\,\js\nh_\svt\w,\jp\hskip-1.3pt_\svt\w,\jr\nh_\svt\w,
\js\nh_\lambda\w,\jp\hskip-1.3pt_\lambda\w,\jr\nh_\lambda\w$, and then
choosing the quadruple (\ref{dqd}), with fixed
$\,(\dot\vt,\dot\lambda)\hs\ne(0,0)$, associated with
the resulting octuple (\ref{qts}).
\end{proof}
Under the assumptions of Theorem~\ref{cakae},
let $\,k=\dim\mathcal{Y}\nh$. For 
all $\,p\hs$-di\-men\-sion\-al horizontal integral elements $\,E
=E_p\w$ of 
$\,\mathcal{I}\nh$, with $\,p\in\{0,1\}$, and for 
$\,r(E)=\dim H\nh(E)-(p+1)\,$ in (\ref{hee}), 
\begin{equation}\label{fxd}
\begin{array}{rl}
\mathrm{a)}&\mathrm{the\ integer\ }\,r(E)\,\mathrm{\ has\ a\ fixed\
nonnegative\ value,\  namely,}\\
\mathrm{b)}&
\dim H\nh(E_0\w)=\,k-\hs q\,\mathrm{\ and\ }\hn\,r(E_0\w)
=\,k-\hs q-1\,\nh\mathrm{\ when\ }\,p=0,\\
\mathrm{c)}&\dim H\nh(E_1\w)\hn=2\,\,\mathrm{\ and\ }\,\,r(E_1\w)\hn
=0\,\,\mathrm{\ in\ the\ case\ where\ }\,p=1\hn. 
\end{array}
\end{equation}
This is obvious from (\ref{igl}) or, respectively, (\ref{evh}).

\section{Where Theorem~\ref{cakae} comes from}\label{wt}
\setcounter{equation}{0}
Here is the Car\-tan-K\"ah\-ler theorem, cited {\it verbatim\/} from 
\cite[pp.\,81--82]{bryant-chern-gardner-goldschmidt-griffiths}:

{\it Let\/ $\,\mathcal{I}\subset\hn\varOmega\hh^*\nnh(M)\,$ be a
real an\-a\-lyt\-ic differential ideal. Let\/ $\,P\subset\nh M\,$ be a
connected,
$\,p\hs$-di\-men\-sion\-al, real an\-a\-lyt\-ic, K\"a\-hler-reg\-u\-lar integral
manifold of\/ $\,\mathcal{I}\nh$.

Suppose that\/ $\,r=r(P)\,$ is a non-neg\-a\-tive integer. Let\/
$\,R\subset\nh M\,$ be a real an\-a\-lyt\-ic  sub\-man\-i\-fold of\/ $\,M\,$
which is of 
co\-dimen\-sion\/ $\,r$, which contains\/ $\,P\nh$, and which satisfies the
condition that\/ $\,\txr\,$ and\/ $\,H(\txr)\,$ are transverse in\/ $\,\txm\,$ 
or all\/ $\,x\in P\nh$.

Then there exists a real analytic integral manifold of\/ $\,\mathcal{I}\nh$,
$\,X$, which is connected and\/ $\,(p+1)$-di\-men\-sion\-al and which
satisfies\/ $\,P\subset X\subset R$. This manifold is unique in the sense that
any other real analytic integral manifold of\/ $\,\mathcal{I}\hs$ with these
properties agrees with\/ $\,X\,$ on an open neighborhood of\/ $\,P\nh$.}

As we verify in the following paragraphs, 
the hypotheses of our Theorem~\ref{cakae} imply those listed above, for
$\,(p,r)=(1,0)$, the manifolds $\,M\nh,R\,$ above which are both replaced
by our $\,\mathcal{Y}\nnh$, and the same ideal $\,\mathcal{I}\hs$ as ours.
By our $\,\mathcal{Y}\hs$ and $\,\mathcal{I}\hs$ we mean the ``general'' ones
(see the three lines preceding Theorem~\ref{cakae}), rather than the very
special choices of $\,\mathcal{Y}\hs$ and $\,\mathcal{I}\hs$ made in
Section~\ref{ae}.

Furthermore, $\,P\hs$ mentioned above is our (arbitrary) horizontal
real-an\-a\-lyt\-ic integral curve of $\,\mathcal{I}\hs$. The resulting
manifold $\,X\,$ corresponds to the horizontal real-an\-a\-lyt\-ic integral
surface of $\,\mathcal{I}\hs$ claimed to exist in
Theorem~\ref{cakae}.

We now proceed to explain why our horizontal integral curve must automatically
be K\"a\-hler-reg\-u\-lar 
\cite[p.\,81]{bryant-chern-gardner-goldschmidt-griffiths}, meaning that its
tangent lines are all K\"a\-hler-reg\-u\-lar in the sense of
\cite[p.\,68,\,Definition 1.7]{bryant-chern-gardner-goldschmidt-griffiths}.
To verify this last claim, we first apply Car\-tan's test 
\cite[p.\,74,\,Theorem 1.11]{bryant-chern-gardner-goldschmidt-griffiths}.
Namely, in the notation of
\cite[p.\,74,\,Theorem 1.11]{bryant-chern-gardner-goldschmidt-griffiths},
$\,n=1\,$ (as we are dealing with tangent {\it lines}). Due to
the relation $\,\dim H\nh(E_0\w)=k-q\,$ in (\ref{fxd}.b), and
(\ref{igl}), $\,H\nh(E_0\w)\,$ is of co\-dimen\-sion $\,q$ in the tangent
space of $\,\mathcal{Y}\hs$ containing it, the same as the co\-dimen\-sion, in
the $\,(2k-1)$-di\-men\-sion\-al Grass\-mann manifold
$\,\mathrm{Gr}_1\w\hs\mathcal{Y}\,$ of lines tangent
to $\,\mathcal{Y}\nh$, of the $\,(2k-q-1)$-di\-men\-sion\-al sub\-man\-i\-fold
$\,V\hskip-3pt_1\w(\mathcal{I})\,$ formed by all integral lines of
$\,\mathcal{I}\nh$. Car\-tan's test thus shows that every line $\,E_1\w$
tangent to our horizontal integral curve is {\it ordinary\/} 
\cite[p.\,73,\,Definition 1.9]{bryant-chern-gardner-goldschmidt-griffiths}. 
The K\"a\-hler-reg\-u\-lar\-i\-ty of $\,E_1\w$ now trivially follows, as
$\,r\,$ in \cite[pp.\,67-68]{bryant-chern-gardner-goldschmidt-griffiths} 
has the constant value $\hs0\hs$ according to (\ref{fxd}.c). This is 
also the value $\,r=r(P)\,$ in the italicized statement cited above from 
\cite{bryant-chern-gardner-goldschmidt-griffiths}. Cf.\
\cite[pp.\,81--82, the lines preceding
Theorem 2.2]{bryant-chern-gardner-goldschmidt-griffiths}.

\section{Proof of Theorem~\ref{reali}}\label{pt}
\setcounter{equation}{0}
Let $\,\nabla\hs$ (or, $\,g$) be a connection (or, a
pseu\-\hbox{do\hs-}Riem\-ann\-i\-an metric) on a $\,C^\infty$ manifold
$\,M\nh$. We call $\,\nabla\hs$ or $\,g\,$ {\it real-an\-a\-lyt\-ic\/} if, in
a suitable coordinate system around every point of $\,M\nh$, its components 
$\,\vg_{\hskip-2.1ptjk}^{\hs l}$ (or, $\,g_{jk}\w$) are real-an\-a\-lyt\-ic
functions of the coordinates. The $\,C^\infty$ structure of $\,M\,$ then
contains a unique real-an\-a\-lyt\-ic structure (maximal atlas) making
$\,\nabla\hs$ or, $\,g\,$ real-an\-a\-lyt\-ic. (The atlas consists of all
coordinate systems just mentioned; their mutual transition mappings are
real-an\-a\-lyt\-ic due to real-an\-a\-lyt\-ic\-i\-ty of af\-fine mappings,
or isometries, between manifolds with real-an\-a\-lyt\-ic
con\-nec\-tions/met\-rics, which follows since such mappings appear linear in
geodesic coordinates.) Real-an\-a\-lyt\-ic\-i\-ty of a metric $\,g\,$
obviously implies that of its Le\-vi-Ci\-vi\-ta connection $\,\nabla\hs$
(and vice versa, since $\,\nabla\nh g=0$).

For a real-an\-a\-lyt\-ic (Riemannian) K\"ah\-ler metric $\,g\,$ on a complex
manifold $\,M\nh$, the unique real-an\-a\-lyt\-ic structure described above
coincides with the one induced by the complex structure of $\,M\nh$. In fact, 
local hol\-o\-mor\-phic coordinate functions, being $\,g$-har\-mon\-ic, must
be real-an\-a\-lyt\-ic relative to the former structure, as a consequence of
the standard regularity theory of elliptic partial differential equations
applied to the $\,g$-La\-plac\-i\-an $\,\Delta$.
\begin{proof}[Proof of Theorem~\ref{reali}]Combining Theorems~\ref{exist}
and~\ref{equiv}, we obtain the first assertion of Theorem~\ref{reali}.

For the second one we invoke the existence results of \cite{wang-zhu} and 
\cite{chen-lebrun-weber}. In both cases, $\,dQ\wedge d\vt\ne0\,$ 
somewhere, and the metric is real-an\-a\-lyt\-ic. The former claim follows,
for instance, since a compact K\"ah\-ler surface with a nontrivial
hol\-o\-mor\-phic gradient $\,\nabla\nh\vt\,$ having $\,dQ\wedge d\vt=0\,$
identically for $\,\jq=g(\nabla\nh\vt,\nnh\nabla\nh\vt)\,$ must
necessarily \cite[Sect.\,1]{derdzinski-iu} be 
bi\-hol\-o\-mor\-phic to $\,\bbCP^2$ or a $\,\bbCP^1$ bundle over $\,\bbCP^1$
(rather than the two-point blow-up of $\,\bbCP^2$). The latter, in the case
of \cite{wang-zhu}, is due to a general reason: all Ric\-ci sol\-i\-tons are 
real-an\-a\-lyt\-ic \cite[Lemma\,3.2]{dancer-wang}. So are, however,
all Riemannian Ein\-stein metrics \cite[Theorem\,5.2]{deturck-kazdan}, and
the Chen-LeBrun-Weber metric of \cite{chen-lebrun-weber} is con\-for\-mal 
to an Ein\-stein metric $\,\hatg$, while again, for a general reason
\cite[p.\,417, Prop.\,3(ii)]{derdzinski-83}, the con\-for\-mal change leading
from $\,\hatg\,$ to $\,g\,$ has a canonical form (up to a constant factor, it
is the multiplication by the cubic root of the norm-squared of the self-dual
Weyl tensor). This causes $\,g\,$ to be real-an\-a\-lyt\-ic as well.
\end{proof}

\section{The an\-a\-lyt\-ic-con\-tin\-u\-a\-tion phenomenon}\label{ac}
\setcounter{equation}{0}
We elaborate here on the plausibility of small 
deformations mentioned in the lines following Theorem~\ref{reali}, 
beginning with the coth-cot an\-a\-lyt\-ic con\-tin\-u\-a\-tion. 
The real-an\-a\-lyt\-ic function $\,\bbR\ni y\mapsto y^{-\nnh1}\nnh\tanh y$,
with the value $\,1\,$ at $\,y=0$, being even, has the form $\,\varSigma(y^2)\,$ for
some real-an\-a\-lyt\-ic  function $\,\varSigma$. Now 
$\,(\ve,\vt)\mapsto\bz\hn_\ve\w\hn(\vt)=\vt \varSigma(\ve\vt^2)\,$ is a
real-an\-a\-lyt\-ic function on an open subset of $\,\rto$ and
$\,\bz\hn_\ve\w\hn(\vt)$ equals $\,\ve^{-\nnh1/2}\tanh(\ve^{1/2}\vt)$, or
$\,\vt$,
or $\,|\ve|^{-\nnh1/2}\tan(|\ve|^{1/2}\vt)$, depending on whether $\,\ve>0$,
or $\,\ve=0$, or $\,\ve<0$. For
$\,\az\hn_\ve\w(\vt)=2/\nnh\bz\hn_\ve\w\hn(\vt)\,$
the analogous expressions are
\[
2\hs\ve^{1/2}\coth(\ve^{1/2}\vt)\,\,\,\,(\mathrm{if\ }\,\ve>0)\hh,\hskip14pt
2/\nh\vt\,\,\,\,(\mathrm{if\ }\,\ve=0)\hh,\hskip14pt
2\hs|\ve|^{1/2}\cot(|\ve|^{1/2}\vt)\,\,\,\,(\mathrm{if\ }\,\ve<0)\hh.
\]
All $\,\az\hn_\ve\w$ with $\,\ve>0$, as well as those with $\,\ve<0$, are thus
af\-fine (in fact, linear) modifications -- see Remark~\ref{affmo} -- of
$\,\az_1\w$ or,
respectively, $\,\az\nh_{-\nnh1}\w$, and $\,\az_0\w(\vt)=2/\nh\vt$.

For a tanh-coth an\-a\-lyt\-ic-con\-tin\-u\-a\-tion argument we define
$\,(t,\vt)\mapsto\az_t\w\hn(\vt)\,$ by
$\,\az\nh_t\w\hn(\vt)
=2(e^\evt\nh-\hs te^{-\nh\evt})/(e^\evt\nh+\hs te^{-\nh\evt})$.
Thus, with $\,q\,$ such that $\,2q=\log|t|$, if $\,t>0\,$ (or, $\,t<0$),
$\,\az\hn_t\w\hn(\vt)=2\hn\tanh(\vt-q)\,$ or,
respectively, $\,\az\hn_t\w\hn(\vt)=2\hn\coth(\vt-q)$. Again, all $\,\az_t\w$
for $\,t>0$,
or those with $\,t<0$, are af\-fine (this time, translational) modifications
of $\,\az_1\w$, or of $\,\az\nh_{-\nnh1}\w$, while $\,\az_0\w(\vt)=2$.

\end{document}